\documentclass[10pt]{article}
\title{An extension of the proximal point algorithm beyond convexity}
\author{S.-M. Grad,\thanks{Faculty of Mathematics, University of Vienna, Oskar-Morgenstern-Platz 1, A-1090 Vienna, Austria. ~ E-mail: sorin-mihai.grad@univie.ac.at, ORCID-ID: 0000-0002-1139-7504, Corresponding author} ~~~
F. Lara\thanks{Departamento de Matem\'atica, Facultad de Ciencias, Universidad de Tarapac\'a, Arica, Chile. ~ E-mail: felipelaraobreque@gmail.com; flarao@uta.cl}}
\date{\small \emph{\today}}
\usepackage{amsmath}
\usepackage{amsthm}
\usepackage{amssymb}
\usepackage{multicol}
\usepackage[active]{srcltx}
\usepackage{algorithm}
\usepackage{array}
\usepackage[dvipsnames,svgnames]{xcolor}

\numberwithin{equation}{section}

\DeclareMathOperator*\prox{Prox}%
\DeclareMathOperator*\dom{dom}%
\DeclareMathOperator*\epi{epi}%
\DeclareMathOperator*\bd{bd}%
\DeclareMathOperator*\inte{int}%
\DeclareMathOperator*\id{Id}%
\DeclareMathOperator*\amin{\arg\min}%
\newtheorem{lemma}{Lemma}[section]
\newtheorem{theorem}{Theorem}[section]
\newtheorem{corollary}{Corollary}[section]
\newtheorem{proposition}{Proposition}[section]
\newtheorem{example}{Example}[section]
\newtheorem{definition}{Definition}[section]
\newtheorem{remark}{Remark}[section]

\begin{document}
\maketitle
\begin{abstract}

\noindent {\bf Abstract.} {\small We introduce and investigate a new genera\-li\-zed convexity notion for functions called prox-convexity. The proximity operator of such a function is single-valued and firmly nonexpansive. We provide examples of (strongly) quasiconvex, weakly convex, and DC (difference of convex) functions that are prox-convex, however none of these classes fully contains the one of prox-convex functions or is included into it. We show that the classical proximal point algorithm remains convergent when the convexity of the proper lower semicontinuous function to be minimized is relaxed to prox-convexity.} 

\medskip

\noindent{\small \emph{Keywords.} Nonsmooth optimization, Nonconvex optimization, Proximity operator, Proximal point algorithm, Generalized convex function}
\end{abstract}
\maketitle

\section{Introduction}\label{sec:1}

The first motivation behind this study comes from works like \cite{CCO,LAT,PCH,PMO}
where proximal point type methods for minimizing quasiconvex functions formulated by means of Bregman distances were proposed. On the other hand, other
extensions of the proximal point algorithm for nonconvex optimization pro\-blems (such as the ones introduced in \cite{COP, IPS, LEW, PEN}) cannot be employed in such situations due to various reasons. Looking for a way to reconcile these approaches we came across a new class of generalized convex functions that we called prox-convex, whose properties allowed us to extend the convergence of the classical proximal point algorithm beyond the convexity setting into a yet unexplored direction.

In contrast to other similar generalizations, the proximity operators of the proper prox-convex functions are single-valued (and firmly nonexpansive) on the underlying sets. To the best of our knowledge besides the convex and prox-convex functions only the weakly convex ones have single-valued proximity operators (cf. \cite{HSA}). This property plays a crucial role in the construction of proximal point type algorithms as the new iterate is thus uniquely determined and does not have to be picked from a set. Moreover, the prox-convexity of the functions can be considered both globally or on a subset of their domains, that can be of advantage when dealing with concrete applications from practice. Various functions, among which several families of (strongly) quasiconvex, weakly and DC (i.e. difference of convex) ones, fulfill the definition of the new notion we propose. As a byproduct of our study we also deliver new results involving (strongly) quasiconvex functions.

Different to other extensions of the proximal point algorithm, the one we propose has a sort of a local nature, however not in the sense of properties of a function that hold in some neighborhoods, but concerning the restriction of the function to a (convex) set. We are not aware of very similar work in the literature where the proximity operator of a function is taken with respect to a given set, however in works like \cite{BCF, GRN} such constructions with some employed functions not split from the corresponding sets
were already considered.

Given a proper, lower semicontinuous and convex function $h: \mathbb{R}^{n} \rightarrow \overline{\mathbb{R}} := \mathbb{R} \cup \{\pm \infty\}$, for any $z \in \mathbb{R}^{n}$ the minimization problem
\begin{equation}\label{main:prox}
 \min_{x \in \mathbb{R}^{n}} \left[h(x) + \frac{1}{2} \lVert z - x \rVert^{2}\right]
\end{equation}
has (even in more general frameworks such as Hilbert spaces) a unique optimal solution denoted by $\prox_{h} (z)$, that is the value of the \textit{proximity operator} of the function $h$ at the point $z$.
A fundamental pro\-per\-ty of the latter is when $z, \overline{x} \in \mathbb{R}^{n}$ (see, for instance, \cite[Proposition 12.26]{BAC})
\begin{equation}\label{fundamental}
 \overline{x} = {\prox}_{h} (z) ~ \Longleftrightarrow ~ z - \overline{x} \in \partial h(\overline{x}),
\end{equation}
where $\partial h$ is the usual convex subdifferential.

These two facts (the existence of an optimal solution to \eqref{main:prox} and the
cha\-rac\-terization \eqref{fundamental}) are crucial tools for proving the
convergence of the proximal point type algorithms for continuous optimization
problems consisting in minimizing (sums of) proper, lower semicontinuous and convex
functions, and even for DC programming problems (see \cite{BBO} for instance).
For the class of prox-convex functions introduced in this article the first of them holds while the second one is replaced by a weaker variant and we show that these properties still guarantee the convergence of the sequence generated by the proximal point algorithm towards a minimum of a prox-convex function.

The paper is constructed as follows. After some preliminaries, where we de\-fi\-ne
the framework and recall some necessary notions and results, we introduce and
investigate the new classes of prox-convex functions and strongly G-subdifferentiable
functions, showing that the proper and lower semicontinuous elements of the latter
belong to the first one, too. Finally, we show that the classical proximal point
algorithm can be extended to the prox-convex setting without losing the convergence.

\section{Preliminaries}

By $\langle \cdot,\cdot \rangle$ we mean the \textit{inner product} of $\mathbb{R}^{n}$ and by $\lVert \cdot \rVert$ the Euclidean \textit{norm} on $\mathbb{R}^{n}$. Let $K$ be a nonempty set in $\mathbb{R}^{n}$ and we denote its \textit{topological interior} by
$\inte K$ and its \textit{boundary} by $\bd K$. The \textit{indicator function} of $K$ is defined by $\delta_{K} (x) := 0$ if $x \in K$, and $\delta_{K} (x):= + \infty$ elsewhere. By $\mathbb{B} (x, \delta)$ we mean the \textit{closed ball} with center at $x\in \mathbb{R}^{n}$ and radius $\delta > 0$. By $\id:\mathbb{R}^n\to \mathbb{R}^n$ we denote the \textit{identity mapping} on $\mathbb{R}^n$.

Given any $x, y, z \in \mathbb{R}^{n}$, we have
 \begin{equation}\label{3:points}
  \langle x - z, y - x \rangle = \frac{1}{2} \lVert z - y \rVert^{2} - \frac{1}{2}
  \lVert x - z \rVert^{2} - \frac{1}{2} \lVert y - x \rVert^{2}.
 \end{equation}

For any $x, y \in \mathbb{R}^{n}$ and any $\beta \in \mathbb{R}$, we have
 \begin{equation}
  \lVert \beta x + (1-\beta) y \rVert^{2} = \beta \lVert x \rVert^{2} + (1 -
  \beta) \lVert y\rVert^{2} - \beta (1 - \beta) \lVert x - y \rVert^{2}.
 \end{equation}

Given any extended-valued function $h:\mathbb{R}^{n}\rightarrow \overline
{\mathbb{R}} := \mathbb{R}\cup \{ \pm \infty \}$, the \textit{effective domain}
of $h$ is defined by $\dom \,h := \{x \in \mathbb{R}^{n}: h(x) < + \infty \}$.
We say that $h$ is \textit{proper} if $\dom \,h$ is nonempty and $h(x) > -
\infty$ for all $x \in \mathbb{R}^{n}$.

We denote by $\epi h := \{(x,t) \in \mathbb{R}^{n} \times \mathbb{R}: h(x)
\leq t\}$ the \textit{epigraph} of $h$, by $S_{\lambda} (h) := \{x \in \mathbb{R}^{n}:
h(x) \leq \lambda\}$ (respectively $S^{<}_{\lambda} (h) := \{x \in \mathbb{R}^{n}: h(x)
< \lambda\}$ the \textit{sublevel} (respectively \textit{strict sublevel}) \textit{set} of $h$ \textit{at the height}
$\lambda \in \mathbb{R}$, and by ${\amin}_{\mathbb{R}^{n}} h$ the set of all minimal points of $h$.
We say that a function is \textit{$L$-Lipschitz} when it is Lipschitz continuous
with constant $L > 0$. We adopt the usual convention $\sup_{\emptyset} h := -
\infty$ and $\inf_{\emptyset} h := + \infty$.

A function $h$ with a convex domain is said to be
\begin{itemize}
 \item[$(a)$] \textit{convex} if, given any $x, y \in \dom \,h$, then
 \begin{equation}
  h(\lambda x + (1-\lambda)y) \leq \lambda h(x) + (1 - \lambda) h(y), ~ \forall ~
 \lambda \in [0, 1];
 \end{equation}
 \item[$(b)$] \textit{semistrictly quasiconvex} if, given any $x, y \in \dom ~h,$
 with $h(x) \neq h(y)$, then
 \begin{equation}
  h(\lambda x + (1-\lambda)y) < \max \{h(x), h(y)\}, ~ \forall ~ \lambda \in
 \, ]0, 1[;
 \end{equation}
 \item[$(c)$] \textit{quasiconvex} if, given any $x, y \in \dom \,h$, then
 \begin{equation}\label{def:qcx}
  h(\lambda x + (1-\lambda) y) \leq \max \{h(x), h(y)\}, ~ \forall ~ \lambda
 \in [0,1].
 \end{equation}
 \noindent We say that $h$ is \textit{strictly quasiconvex} if the inequality in
 \eqref{def:qcx} is strict (see \cite[page 90]{HKS}).
\end{itemize}
Every convex function is quasiconvex and semistrictly quasiconvex,
and every semistrictly quasiconvex and lower semicontinuous function is quasiconvex (see \cite[Theorem 2.3.2]{CMA}). The function $h:
\mathbb{R} \rightarrow \mathbb{R}$, with $h(x) := \min\{\lvert x \rvert, 1\}$,
is quasiconvex, without being semistrictly quasiconvex.

A function $h$ is said to be \textit{neatly quasiconvex} (see \cite[Definition 4.1]{HAD})
if $h$ is quasiconvex and for every $x\in \mathbb{R}^n$ with $h(x) > \inf h$, the sets $S_{h(x)}
(h)$ and $S^{<}_{h(x)} (h)$ have the same closure (or equivalently, the same
relative interior). As a consequence, a quasiconvex function $h$ is neatly
quasiconvex if and only if every local minimum of $h$ is global minimum (see
\cite[Proposition 4.1]{HAD}). In parti\-cu\-lar, every semistrictly
quasiconvex function is neatly quasiconvex, and every continuous and neatly
quasiconvex function is semistrictly quasiconvex by \cite[Proposition
4.2]{HAD}. The function in \cite[Example 4.1]{HAD} is neatly quasiconvex
without being semistrictly quasiconvex. Recall that
\begin{eqnarray*}
 h ~ {\rm is ~ convex} & \Longleftrightarrow & \epi h ~ {\rm is ~ a ~ convex
 ~ set;}  \\
 h ~ {\rm is ~ quasiconvex} & \Longleftrightarrow & S_{\lambda} (h) ~ {\rm is ~ a
 ~ convex ~ set ~ for ~ all ~ } \lambda \in \mathbb{R}.
\end{eqnarray*}

For algorithmic purposes, the following notions from \cite[Definition 10.27]{BAC} (see also \cite{VI1,VI2}) are useful.

A function $h$ with a convex domain is said to be \textit{strongly convex} (respectively \textit{strongly quasiconvex}), if there exists $\beta \in ]0, + \infty[$ such
that for all $x, y \in \dom \,h$ and all $\lambda \in [0, 1]$, we have
 \begin{align}
  & ~~~~ h(\lambda y + (1-\lambda)x) \leq \lambda h(y) + (1-\lambda) h(x) - \lambda
  (1 - \lambda) \frac{\beta}{2} \lVert x - y \rVert^{2}, \label{strong:convex} \\
  & \left( {\rm respectively} ~ h(\lambda y + (1-\lambda)x) \leq \max\{h(y), h(x)\} -
  \lambda (1 - \lambda) \frac{\beta}{2} \lVert x - y \rVert^{2}. \right)
  \label{strong:quasiconvex}
 \end{align}
 For \eqref{strong:quasiconvex}, sometimes one needs to restrict the value
 $\beta$ to a subset $J$ in $]0, + \infty[$ and then $h$ is said to be {\it strongly quasiconvex for $J$}.

Every strongly convex function is strongly quasiconvex, and eve\-ry strongly
quasiconvex function is semistrictly quasiconvex. Furthermore, a strong\-ly
quasiconvex function has at most one minimizer on a convex set $K \subseteq
\mathbb{R}^n$ that touches its domain (see \cite[Proposition 11.8]{BAC}).

\medskip

A function $h: \mathbb{R}^{n} \rightarrow \overline{\mathbb{R}}$ is said to be
\begin{itemize}
 \item[$(a)$] \textit{supercoercive} if
 \begin{equation}\label{supercoercive}
  \liminf_{\lVert x \rVert \rightarrow + \infty} \frac{h(x)}{\lVert x \rVert}
  = + \infty;
 \end{equation}

 \item[$(b)$] \textit{coercive} if
 \begin{equation}\label{coercive}
  \lim_{\lVert x \rVert \rightarrow + \infty} h(x) = + \infty;
 \end{equation}

 \item[$(c)$] \textit{weakly coercive} if
 \begin{equation}\label{weakly:coercive}
  \liminf_{\lVert x \rVert \rightarrow + \infty} \frac{h(x)}{\lVert x \rVert}
  \geq 0;
 \end{equation}
 \item[$(d)$] \textit{$2$-weakly coercive} if
 \begin{equation}\label{2weakly:coercive}
  \liminf_{\lVert x \rVert \rightarrow + \infty} \frac{h(x)}{\lVert x \rVert^{2}}
  \geq 0.
  \end{equation}
\end{itemize}
Clearly, $(a) \Rightarrow (b) \Rightarrow (c) \Rightarrow (d)$. The function $h(x)
= \sqrt{\lvert x \rvert}$ is coercive without being supercoercive; the function
$h(x) = - \sqrt{\lvert x \rvert}$ is weakly coercive without being coercive
(moreover, it is not even bounded from below). Finally, the function $h(x) =
- \lvert x \rvert$ is $2$-weakly coercive without being weakly coercive. Recall
that $h$ is coercive if and only if $S_{\lambda} (h)$ is a bounded set for every
$\lambda \in \mathbb{R}$. A survey on coercivity notions is \cite{CCA}.

\medskip

The \textit{convex subdifferential} of a proper function $h: \mathbb{R}^{n} \rightarrow \overline{\mathbb{R}}$ at $x \in \mathbb{R}^{n}$ is
\begin{equation} \label{subd:usual}
 \partial h(x) := \left\{ \xi \in \mathbb{R}^{n}: h(y) \geq h(x) + \langle \xi, y - x \rangle, ~ \forall ~ y \in \mathbb{R}^{n}\right\},
\end{equation}
when $x \in \dom \,h$, and $\partial h(x) = \emptyset$ if $x \not\in \dom \,h$. But in case of nonconvex functions (quasiconvex for ins\-tance) the
convex subdifferential is too small and often empty, other subdifferential notions (see \cite{GUT,PLA}) being necessary, like the \textit{Guti\'errez subdifferential} (of $h$ at $x$), defined by
\begin{equation}\label{Guti:sub}
 \partial^{\leq} h(x) := \left\{ \xi \in \mathbb{R}^{n}: ~ h(y) \geq h(x) + \langle
 \xi, y - x \rangle, ~ \forall ~ y \in S_{h(x)} (h)\right\},
\end{equation}
when $x \in \dom \,h$, and $\partial^{\leq} h(x) = \emptyset$ if $x \not\in \dom \,h$, or the \textit{Plastria subdifferential} (of $h$ at $x$), that is
 \begin{equation}\label{Plastria:sub}
  \partial^{<} h(x) := \left\{ \xi \in \mathbb{R}^{n}: ~ h(y) \geq h(x) + \langle
  \xi, y - x \rangle, ~ \forall ~ y \in S^{<}_{h(x)} (h)\right\},
 \end{equation}
when $x \in \dom \,h$, and $\partial^{<} h(x) = \emptyset$ if $x \not\in \dom \,h$. Clearly, $\partial h \subseteq \partial^{\leq} h \subseteq \partial^{<} h$. The
reverse inclusions do not hold as the function $h: \mathbb{R} \rightarrow
\mathbb{R}$ given by $h(x) = \min\{x, \max\{x-1, 0\}\}$ shows (see \cite[page
21]{PE1}). A sufficient condition for equality in this inclusion chain is given in \cite[Proposition
10]{PE1}.

Note that both $\partial^{\leq} h$ and $\partial^{<} h$ are (at any point) either
empty or unbounded, and it holds (see \cite{GUT,PLA,PE1})
\begin{equation}\label{G:charmin}
 0 \in \partial^{<} h(x) \Longleftrightarrow 0 \in \partial^{\leq} h(x)
 \Longleftrightarrow x \in \amin_{\mathbb{R}^{n}}\,h \Longleftrightarrow
 \partial^{\leq} h(x) = \mathbb{R}^{n}.
\end{equation}

We recall the following results originally given in \cite[Theorem 2.3]{PLA}, \cite[Proposition 2.5 and Proposition 2.6]{XRG} and \cite[Theorem 20]{CSE}, respectively.

\begin{lemma}\label{nonempty:subd}
 Let $h: \mathbb{R}^{n} \rightarrow \overline {\mathbb{R}}$ be a proper function. The following results hold.
 \begin{itemize}
  \item[$(a)$] If $h$ is quasiconvex and $L$-Lipschitz, then $\partial^{<} h(x)
  \neq \emptyset$ for all $x \in \mathbb{R}^{n}$. Mo\-re\-over, there exists $\xi
  \in \partial^{<} h(x)$ such that $\lVert \xi \rVert \leq L$.

  \item[$(b)$] If $h$ is neatly quasiconvex and $L$-Lipschitz, then $\partial^{
  \leq} h (x) \neq \emptyset$ for all $x \in \mathbb{R}^{n}$. Moreover, if $u \in \partial^{\leq} h(x)$, $u \neq 0$, then
  $L \frac{1}{\lVert u \rVert}u \in \partial^{\leq} h(x)$.
 \end{itemize}
\end{lemma}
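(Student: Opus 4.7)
The plan is to prove both parts by combining quasiconvexity-based separation arguments with a ``ray argument'' that exploits the Lipschitz property to obtain the quantitative norm bound $L$.

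\emph{Part (a).} If $h(x) = \inf h$, then $S^<_{h(x)}(h) = \emptyset$ and $\xi = 0$ trivially belongs to $\partial^< h(x)$. Otherwise, the set $C := S^<_{h(x)}(h)$ is a nonempty convex set (by quasiconvexity of $h$) with $x \notin C$, so a separation theorem yields a unit vector $e \in \mathbb{R}^n$ with $\langle e, y - x \rangle \leq 0$ for every $y \in C$. I claim that $\xi := Le$ lies in $\partial^< h(x)$. Fix $y \in C$: since $\langle e, y + te - x \rangle = \langle e, y - x \rangle + t$ becomes strictly positive once $t > -\langle e, y - x \rangle \geq 0$, the point $y + te$ falls outside $C$ and so $h(y + te) \geq h(x)$. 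Continuity of $h$ then yields some $t^{*} \in [0, -\langle e, y - x \rangle]$ with $h(y + t^{*} e) = h(x)$, and the $L$-Lipschitz estimate $h(x) - h(y) = h(y + t^{*} e) - h(y) \leq L t^{*}$ gives $t^{*} \geq (h(x) - h(y))/L$. Combining these yields
\[
\langle \xi, y - x \rangle = L \langle e, y - x \rangle \leq -L t^{*} \leq -(h(x) - h(y)) = h(y) - h(x),
\]
proving $\xi \in \partial^< h(x)$ with $\lVert \xi \rVert = L$.

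\emph{Part (b).} Under neat quasiconvexity combined with the (Lipschitz-induced) continuity of $h$, I would first establish the equality $\partial^< h(x) = \partial^{\leq} h(x)$: given $\xi \in \partial^< h(x)$ and $y \in S_{h(x)}(h)$ with $h(y) = h(x)$, one approximates $y$ by a sequence $y_n \in S^<_{h(x)}(h)$ (available since the two sets share a closure) and passes to the limit in the defining inequality. Nonemptiness of $\partial^{\leq} h(x)$ then follows from part (a). For the scaling assertion, given $u \in \partial^{\leq} h(x)$ with $u \neq 0$, set $e := u/\lVert u \rVert$; the inequality $\langle u, y - x \rangle \leq h(y) - h(x) \leq 0$, valid for every $y \in S_{h(x)}(h)$, yields $\langle e, y - x \rangle \leq 0$, so $e$ separates $x$ from the sublevel set. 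The ray argument of part (a), applied verbatim, then shows that $Le = Lu/\lVert u \rVert$ belongs to $\partial^{\leq} h(x)$.

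\emph{Main obstacle.} The delicate step is the ray construction that upgrades the qualitative separation into the quantitative subgradient inequality. The Lipschitz constant $L$ enters precisely as the maximal rate at which $h$ can climb back from $h(y)$ to the level $h(x)$ along a unit-speed direction, and this rate is exactly what determines the correct scaling factor on the separating direction; without the Lipschitz hypothesis, the ``detour'' distance $t^{*}$ could be arbitrarily small and no norm bound on the subgradient would be available.
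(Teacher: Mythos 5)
Your argument is correct and complete. Note that the paper itself offers no proof of this lemma: it is recalled as a known result with references to Plastria, Xu--Rubinov--Glover and Censor--Segal, and your separation-plus-ray construction is essentially the classical argument from those sources, so there is nothing to contrast. Two points worth making explicit if you write this up: at $t_0:=-\langle e,y-x\rangle$ you only know $\langle e, y+t_0e-x\rangle=0$, so you should pass to the limit $t\downarrow t_0$ and use continuity to get $h(y+t_0e)\ge h(x)$ before invoking the intermediate value theorem on $[0,t_0]$; and in part (b) the degenerate case $h(x)=\inf h$ should be dispatched separately (there $\partial^{\leq}h(x)=\mathbb{R}^{n}$ by \eqref{G:charmin}, so both the nonemptiness and the scaling claim are immediate). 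Neither affects the validity of the proof.
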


For $\gamma>0$ we define the \textit{Moreau envelope of parameter} ${\gamma}$ of $h$ by
\begin{equation}
 ^{\gamma}h(z) = \inf_{x \in \mathbb{R}^{n}} \left(h(x) + \frac{1}{2\gamma}
 \lVert z-x \rVert^{2}\right).
\end{equation}

The \textit{proximity operator of parameter $\gamma >0$} of a function $h:\mathbb{R}^{n} \rightarrow \overline {\mathbb{R}}$ at $x \in \mathbb{R}^{n}$ is
defined as
\begin{equation}\label{gammah-def}
 {\prox}_{\gamma h}: \mathbb{R}^{n} \rightrightarrows \mathbb{R}^{n},\ {\prox}_{\gamma h}(x)= \amin\limits_{y\in \mathbb{R}} \left\{h(y) + \frac{1}{2 \gamma}
 \|y-x\|^2\right\}.
\end{equation}
When $h$ is proper, convex and lower semicontinuous, ${\prox}_{\gamma h}$ turns out to be a single-valued operator. By a slight abuse of notation, when ${\prox}_{\gamma h}$ is single-valued we write in this paper ${\prox}_{\gamma h}(z)$ (for some $z\in \mathbb{R}^n$) to identify the unique element of the actual set ${\prox}_{\gamma h}(z)$. Moreover, when $\gamma =1 $ we write ${\prox}_{h}$
instead of ${\prox}_{1 h}$.

For studying constrained optimization problems, the use of constrained notions
becomes important since they ask for weaker conditions. Indeed, for instance, the
function $h: \mathbb{R} \rightarrow \mathbb{R}$ given by $h(x) = \min\{\lvert x
\rvert, 2\}$ is convex on $K = [-2, 2]$, but is not convex on $\mathbb{R}$.

For a nonempty set $K$ in $\mathbb{R}^{n}$, by $\partial_{K} h(x), \partial^{\leq
}_{K} h(x)$ and $\partial^{<}_{K} h(x)$, we mean the convex, Guti\'errez and
Plastria subdifferentials of $h$ at $x \in K$ restricted to the set $K$, that is,
\begin{align*}
 & \partial_{K} h(x) := \partial (h+\delta_K)(x) = \left\{ \xi \in \mathbb{R}^{n}:
 ~ h(y) \geq h(x) + \langle \xi, y - x \rangle, ~ \forall ~ y \in K \right\},
\end{align*}
as well as $\partial^{\leq}_{K} h(x) := \partial^{\leq} (h+\delta_K)(x)$ and $\partial^{<}_{K} h(x) := \partial^{<} (h+\delta_K)(x)$.

For $K\subseteq \mathbb{R}^{n}$, a single-valued operator $T: K \rightarrow \mathbb{R}^{n}$ is called
\begin{itemize}
 \item[$(a)$] \textit{monotone} on $K$, if for all $x, y \in K$, we have
  \begin{equation}\label{def:monoT}
   \langle T(x) - T(y), x - y \rangle \geq 0;
  \end{equation}

 \item[$(b)$] \textit{firmly nonexpansive} if for every $x, y \in K$, we have
 \begin{equation}\label{FNE}
  \lVert T(x) - T(y) \rVert^{2} + \lVert (\id - T) (x) - (\id - T) (y) \rVert^{2}
  \leq \lVert x - y \rVert^{2}, ~ \forall ~ x, y \in K,
 \end{equation}
\end{itemize}

According to \cite[Proposition 4.4]{BAC}, $T$ is firmly nonexpansive if and only if
\begin{equation}\label{FNEM}
 \lVert T(x) - T(y) \rVert^{2} \leq \langle x - y, T(x) - T(y) \rangle, ~
 \forall ~ x, y \in K.
\end{equation}
As a consequence, if $T$ is firmly nonexpansive, then $T$ is Lipschitz continuous and monotone.

\section{Prox-convex functions}\label{sec:3}

In this section, we introduce and study a class of functions for which the
nece\-ssa\-ry fundamental properties presented in the introduction are satisfied.

\subsection{Motivation, definition and basic properties}

We begin with the following result, in which we provide a general sufficient condition for the nonemptiness of the values of the proximity operator.

\begin{proposition}\label{pr1}
\textit{Let $h: \mathbb{R}^{n} \rightarrow \overline{\mathbb{R}}$ be a proper,
lower semicontinuous and $2$-weakly coercive func\-tion. Given any $z \in
\mathbb{R}^{n}$, there exists $\overline{x} \in \prox_{h} (z)$.}
\end{proposition}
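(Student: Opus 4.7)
The plan is to invoke the classical Weierstrass existence theorem on the function $f(x) := h(x) + \tfrac{1}{2}\lVert z - x\rVert^{2}$, whose minimizers are by definition the elements of $\prox_{h}(z)$. It therefore suffices to check that $f$ is proper, lower semicontinuous, and coercive, so that some nonempty sublevel set $S_{\lambda}(f)$ is compact (closed by lsc, bounded by coercivity) and $f$ attains its minimum on it.

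Properness and lower semicontinuity of $f$ are immediate: since $h$ is proper with $\dom h \neq \emptyset$ and the quadratic term is finite everywhere, $\dom f = \dom h \neq \emptyset$ and $f > -\infty$ follows from $h > -\infty$; lower semicontinuity of $f$ follows from the lower semicontinuity of $h$ together with the continuity of $x \mapsto \tfrac{1}{2}\lVert z - x\rVert^{2}$.

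The main point is coercivity, and here is where the $2$-weak coercivity hypothesis \eqref{2weakly:coercive} is used. Fix any $\varepsilon \in \,]0, 1/2[$. By $2$-weak coercivity of $h$, there exists $R > 0$ such that $h(x) \geq -\varepsilon \lVert x \rVert^{2}$ whenever $\lVert x \rVert > R$. Expanding the quadratic term gives, for such $x$,
\begin{equation*}
 f(x) \geq -\varepsilon \lVert x \rVert^{2} + \tfrac{1}{2}\lVert x \rVert^{2} - \lVert z\rVert \lVert x \rVert + \tfrac{1}{2}\lVert z \rVert^{2} = \left(\tfrac{1}{2} - \varepsilon\right)\lVert x \rVert^{2} - \lVert z\rVert \lVert x \rVert + \tfrac{1}{2}\lVert z \rVert^{2},
\end{equation*}
and the right-hand side tends to $+\infty$ as $\lVert x\rVert \to +\infty$, proving coercivity of $f$.

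The only subtlety is making sure the $\liminf$ in the definition of $2$-weak coercivity suffices; since $\liminf_{\lVert x\rVert \to +\infty} h(x)/\lVert x \rVert^{2} \geq 0$ means precisely that the indicated lower bound holds outside some ball, the argument goes through. With $f$ proper, lsc and coercive on $\mathbb{R}^{n}$, pick any $x_{0} \in \dom h$ and set $\lambda := f(x_{0})$; then $S_{\lambda}(f)$ is nonempty and compact, so the restriction of the lsc function $f$ to it attains its infimum at some $\overline{x}$, which is a global minimizer of $f$, i.e.\ $\overline{x} \in \prox_{h}(z)$.
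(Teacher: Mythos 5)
Your proposal is correct and follows essentially the same route as the paper: reduce the claim to showing that $h_z(x)=h(x)+\tfrac12\lVert x-z\rVert^2$ is proper, lower semicontinuous and coercive, and then invoke the existence of minimizers for such functions. The only difference is that the paper delegates the coercivity of $h_z$ to a cited result (\cite[Theorem 2$(ii)$]{CCA}), whereas you verify it directly from the definition of $2$-weak coercivity; your verification is sound.
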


\begin{proof}
 Given $z \in \mathbb{R}^{n}$, we consider the minimization problem:
 \begin{equation}\label{def:hz}
  \min_{x \in \mathbb{R}^{n}} h_{z} (x) := h(x) + \frac{1}{2} \lVert x - z
  \rVert^{2}.
 \end{equation}
 Since $h$ is lower semicontinuous and $2$-weakly coercive, $h_{z}$ is lower
 semi\-con\-ti\-nuous and coercive by \cite[Theorem 2$(ii)$]{CCA}. Thus, there
 exists $\overline{x} \in \mathbb{R}^{n}$ such that $\overline{x} \in
 {\amin}_{\mathbb{R}^{n}} h_{z}$, i.e., $\overline{x} \in \prox_{h} (z)$.
\end{proof}

One cannot weaken the assumptions of Proposition \ref{pr1} without losing its conclusion.

\begin{remark}\label{exis:is:optimal}
 \begin{itemize}
  \item[$(i)$] Note that every convex function is $2$-weakly coer\-ci\-ve, and
  every bounded from below function is also $2$-weakly coer\-ci\-ve. The function
  $h: \mathbb{R}^{n} \rightarrow \mathbb{R}$ given by $h(x) = - \lvert x \rvert$
  is $2$-weakly coercive, but is neither convex nor bounded from below. However,
  for any $z \in \mathbb{R}^{n}$, $\prox_{h} (z) \neq \emptyset$.

  \item[$(ii)$] The $2$-weak coercivity assumption can not be dropped in the
  general case. Indeed, the function $h:  \mathbb{R} \rightarrow \mathbb{R}$ given by $h(x) = -x^{3}$ is continuous and quasiconvex, but fails to be $2$-weakly coercive and for any $z \in \mathbb{R}$ one has $\prox_{h} (z) = \emptyset$.
 \end{itemize}
\end{remark}

Next we characterize the existence of solution in the definition of the proximity operator.

\begin{proposition}\label{pr2}
 \textit{Let $h: \mathbb{R}^{n} \rightarrow \overline{\mathbb{R}}$ be a proper
 function. Given any $z \in \mathbb{R}^{n}$, one has
 \begin{equation}\label{prox:nonconvex}
\overline{x} \in {\prox}_{h} (z) ~ \Longleftrightarrow ~ h(\overline{x}) -  h(x) \leq \frac{1}{2} \langle \overline{x} + x - 2z, x - \overline{x} \rangle,   ~ \forall  x \in \mathbb{R}^{n}.
 \end{equation}}
\end{proposition}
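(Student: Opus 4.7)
The plan is to observe that the claimed equivalence is, at heart, a purely algebraic rearrangement of the definition of the proximity operator, with no convexity or lower semicontinuity needed. So I would proceed by direct computation rather than invoking any deeper machinery.

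First I would unfold the definition: $\overline{x}\in\prox_h(z)$ means exactly that $\overline{x}$ minimizes $y\mapsto h(y)+\tfrac12\|y-z\|^2$ over $\mathbb{R}^n$, i.e.
\[
 h(\overline{x})+\tfrac12\|\overline{x}-z\|^2 \;\leq\; h(x)+\tfrac12\|x-z\|^2,\qquad\forall\,x\in\mathbb{R}^n.
\]
Moving the $h$-terms to the left and the norm-terms to the right, this is equivalent to
\[
 h(\overline{x})-h(x) \;\leq\; \tfrac12\|x-z\|^2-\tfrac12\|\overline{x}-z\|^2,\qquad\forall\,x\in\mathbb{R}^n.
\]

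Next I would rewrite the right-hand side using the identity $\|a\|^2-\|b\|^2=\langle a-b,a+b\rangle$ applied to $a=x-z$ and $b=\overline{x}-z$, which gives
\[
 \tfrac12\|x-z\|^2-\tfrac12\|\overline{x}-z\|^2 = \tfrac12\langle (x-z)-(\overline{x}-z),\,(x-z)+(\overline{x}-z)\rangle = \tfrac12\langle x-\overline{x},\,x+\overline{x}-2z\rangle.
\]
Since this is nothing but $\tfrac12\langle \overline{x}+x-2z,\,x-\overline{x}\rangle$, substituting yields exactly the inequality in \eqref{prox:nonconvex}. Both implications of the equivalence are obtained simultaneously because every step is a reversible rearrangement.

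The ``hard part'' is really only to spot which algebraic identity produces the precise form $\langle \overline{x}+x-2z,\,x-\overline{x}\rangle$ stated in the proposition; the identity \eqref{3:points} recalled in the preliminaries would serve equally well, since applying it with the three points $\overline{x},x,z$ yields an equivalent expression. No properness, lower semicontinuity, or coercivity hypothesis is used in the argument, which is consistent with the statement, where only properness of $h$ is assumed (and is only needed implicitly so that $h(\overline{x})$ and $h(x)$ take sensible values when $\overline{x}\in\prox_h(z)$ is nonempty).
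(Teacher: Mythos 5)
Your proof is correct and follows essentially the same route as the paper's: unfold the definition of $\prox_h(z)$, move the $h$-terms to one side, and algebraically identify $\tfrac12\lVert x-z\rVert^2-\tfrac12\lVert\overline{x}-z\rVert^2$ with $\tfrac12\langle\overline{x}+x-2z,\,x-\overline{x}\rangle$ through reversible steps. The only (immaterial) difference is that you use the difference-of-squares identity $\lVert a\rVert^2-\lVert b\rVert^2=\langle a-b,a+b\rangle$ in one shot, whereas the paper passes through the three-point identity \eqref{3:points} and then recombines; both yield the same chain of equivalences.
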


\begin{proof}
Let $z\in \mathbb{R}^{n}$. One has
\begin{align*}
  \overline{x} \in {\prox}_{h} (z) & \Longleftrightarrow \, h(\overline{x}) +
  \frac{1}{2} \lVert \overline{x} - z \rVert^{2} \leq h(x) + \frac{1}{2} \lVert
  x - z \rVert^{2},\ \forall \, x \in \mathbb{R}^{n} \\
  & \Longleftrightarrow \, h(\overline{x}) - h(x) \leq \frac{1}{2} \lVert x - z
  \rVert^{2} - \frac{1}{2} \lVert \overline{x} - z \rVert^{2}m \ \forall \, x \in \mathbb{R}^{n} \\
  & \Longleftrightarrow \, h(\overline{x}) - h(x) \leq \langle \overline{x} - z,
  x - \overline{x} \rangle + \frac{1}{2} \lVert x - \overline{x} \rVert^{2}, \  \forall \, x \in \mathbb{R}^{n} \\
  & \Longleftrightarrow \, h(\overline{x}) - h(x) \leq \frac{1}{2} \langle
  \overline{x} + x - 2z, x - \overline{x} \rangle, \ \forall \, x \in \mathbb{R}^{n}.
 \end{align*}
\end{proof}

Relation \eqref{prox:nonconvex} is too general for providing convergence results
for proximal point type algorithms while relation \eqref{fundamental} has proven to be
extremely useful in the convex case. Motivated by this, we introduce the class of
\textit{prox-convex functions} below. In the following, we write
\begin{equation}\label{prx}
 {\prox}_{h} (K, z) := {\prox}_{(h + \delta_{K})} (z).
\end{equation}

Note that closed formulae for the proximity operator of a sum of functions in terms
of the proximity operators of the involved functions are known only in the convex
case and under demanding hypotheses, see, for instance, \cite{ABC}. However,
constructions like the one in \eqref{prx} can be found in the literature on proximal
point methods for solving different classes of (nonconvex) optimization problems,
take for instance \cite{BCF, GRN}.

\begin{definition}\label{all:functions}
 Let $K$ be a closed set in $\mathbb{R}^{n}$ and $h: \mathbb{R}^{n} \rightarrow
 \overline{\mathbb{R}}$ be a proper func\-tion such that $K \cap \dom \,h \neq
 \emptyset$. We say that $h$ is \textit{prox-convex on $K$} if there exists
 $\alpha > 0$ such that for every $z \in K$, $\prox_{h} (K, z) \neq \emptyset$, and
  \begin{equation}\label{prox:all}
  \overline{x} \in {\prox}_{h} (K, z) ~ \Longrightarrow ~ h(\overline{x}) - h(x)
  \leq \alpha \langle \overline{x} - z, x - \overline{x} \rangle, ~ \forall
  x \in K.
 \end{equation}
The set of all prox-convex function on $K$ is denoted by $\Phi(K)$, and the scalar
$\alpha > 0$ for which \eqref{prox:all} holds is said to be the \textit{prox-convex value} of the function $h$ on $K$. When $K=\mathbb{R}^{n}$ we say that $h$ is \textit{prox-convex}.
\end{definition}

\begin{remark}
\begin{itemize}
 \item[$(i)$] One can immediately notice that \eqref{prox:all} is equivalent to a weaker version
of \eqref{fundamental}, namely
$$ \overline{x} \in {\prox}_{h} (K, z) ~ \Longrightarrow ~
z - \overline{x} \in \partial \left(\frac{1}{\alpha} (h+ \delta_K)\right)(\overline{x}).$$

 \item[$(ii)$] The scalar $\alpha > 0$ for which \eqref{prox:all} holds needs not
  be unique. Indeed, if $h$ is convex, then $\alpha = 1$ by Proposition
  \ref{include:convex}. However, due to the con\-vexity of $h$, $ \langle
  \overline{x} - z, x - \overline{x} \rangle \geq 0$. Hence, $\overline{x} \in
  \prox_{h} (K, z)$ implies that
  \begin{align*}
   h(\overline{x}) - h(x) \leq  \langle \overline{x} - z, x - \overline{x} \rangle
   \leq \gamma \langle \overline{x} - z, x - \overline{x} \rangle, ~ \forall ~
   \gamma \geq 1,\ ~ \forall ~ x \in K.
  \end{align*}
 Note however that a similar result does not necessarily hold in general, as
 $ \langle \overline{x} - z, x - \overline{x} \rangle$ might be negative.

 \item[$(iii)$] Note also that, at least from the computational point of view,
 an exact value of $\alpha$ needs not be known, as one can see in Section \ref{sec:4}.
\end{itemize}
\end{remark}

In the following statement we see that in the left-hand side of \eqref{prox:all} one can replace the element-of symbol with equality since the proximity operator of a proper prox-convex function is single-valued and also firmly nonexpansive.

\begin{proposition}\label{single-val}
Let $K$ be a closed set in $\mathbb{R}^{n}$ and $h: \mathbb{R}^{n} \rightarrow \overline{\mathbb{R}}$ a proper prox-convex function on $K$ such that $K \cap \dom \,h \neq \emptyset$. Then the map $z \rightarrow \prox_{h} (K, z)$ is single-valued and firmly nonexpansive.
\end{proposition}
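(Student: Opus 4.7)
The plan is to exploit the defining inequality \eqref{prox:all} in a symmetric way, using the monotonicity-type cancellations that are standard for the convex proximity operator.

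For single-valuedness, I would fix $z\in K$ and suppose $\overline{x}_1,\overline{x}_2\in\prox_h(K,z)$. Note that both points lie in $K$ (since $\prox_h(K,\cdot)=\prox_{h+\delta_K}$), so each is an admissible test point for the other. Applying \eqref{prox:all} once with $\overline{x}=\overline{x}_1$, $x=\overline{x}_2$, and once with $\overline{x}=\overline{x}_2$, $x=\overline{x}_1$, I obtain two inequalities whose left-hand sides are opposite. Adding them yields
\[
0 \le \alpha\langle \overline{x}_1-\overline{x}_2,\;\overline{x}_2-\overline{x}_1\rangle = -\alpha\,\lVert \overline{x}_1-\overline{x}_2\rVert^2,
\]
so $\overline{x}_1=\overline{x}_2$ since $\alpha>0$. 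This justifies speaking of $\prox_h(K,z)$ as a single point.

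For firm nonexpansiveness I would take $z_1,z_2\in K$ and set $\overline{x}_i:=\prox_h(K,z_i)\in K$ for $i=1,2$. Again, $\overline{x}_1$ is an admissible test point in the inequality at $z_2$ and vice versa. Writing \eqref{prox:all} for each of $z_1,z_2$ with the other minimizer plugged in as $x$, and summing the two inequalities, the $h$-terms cancel and I am left with
\[
0 \le \alpha\,\langle (\overline{x}_1-z_1)-(\overline{x}_2-z_2),\;\overline{x}_2-\overline{x}_1\rangle,
\]
which after rearrangement gives
\[
\lVert \overline{x}_1-\overline{x}_2\rVert^2 \le \langle z_1-z_2,\;\overline{x}_1-\overline{x}_2\rangle.
\]
This is exactly the characterization \eqref{FNEM} of firm nonexpansiveness, so the conclusion follows.

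The argument is almost mechanical once one notices the symmetry trick; the only point that needs a moment of care is confirming that both $\overline{x}_i$ belong to $K$ (so that they are legal substitutions in \eqref{prox:all}), which is automatic from the definition of $\prox_h(K,\cdot)$. I do not anticipate a genuine obstacle, since the multiplicative constant $\alpha>0$ drops out of the firm-nonexpansiveness inequality after dividing, and no appeal to smoothness, convexity of $h$, or boundedness of $K$ is required.
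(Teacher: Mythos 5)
Your proposal is correct and follows essentially the same route as the paper's own proof: apply the defining inequality \eqref{prox:all} symmetrically at the two points, add to cancel the $h$-terms, and conclude single-valuedness from $-\alpha\lVert \overline{x}_1-\overline{x}_2\rVert^2\ge 0$ and firm nonexpansiveness from the characterization \eqref{FNEM}. The only (harmless) cosmetic difference is that you explicitly note $\overline{x}_i\in K$ and the division by $\alpha$, which the paper leaves implicit.
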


\begin{proof}
Suppose that $h$ is a prox-convex function with prox-convex value $\alpha > 0$ and assume that for some $z\in K$ one has $\{\overline{x}_{1}, \overline{x}_{2}\} \subseteq \prox_{h} (K, z)$. Then
 \begin{align}
  & h(\overline{x}_{1}) - h(x) \leq \alpha \langle \overline{x}_{1} - z, x - \overline{x}_{1} \rangle, ~ \forall  x \in K, \label{ec:1}\\
  & h(\overline{x}_{2}) - h(x) \leq \alpha \langle \overline{x}_{2} - z, x - \overline{x}_{2} \rangle, ~ \forall  x \in K. \label{ec:2}
 \end{align}
Take $x = \overline{x}_{2}$ in \eqref{ec:1} and $x = \overline{x}_{1}$ in
\eqref{ec:2}. By adding the resulting equations, we get
$$0 \leq \alpha \langle \overline{x}_{1} - \overline{x}_{2}, \overline{x}_{2} - z +
z -\overline{x}_{1} \rangle = -\alpha \|\overline{x}_{1} - \overline{x}_{2}\|^2
\leq 0.$$
Hence, $\overline{x}_{1} = \overline{x}_{2}$, consequently $\prox_{h} (K, \cdot)$ is single-valued.

Let $z_{1}, z_{2} \in K$ and take $\overline{x}_{1} \in \prox_{h} (K, z_{1})$ and $\overline{x}_{2} \in \prox_{h} (K, z_{2})$. One has
 \begin{align}
  & h(\overline{x}_{1}) - h(x) \leq \alpha \langle \overline{x}_{1} - z_{1}, x -
  \overline{x}_{1} \rangle, ~ \forall ~ x \in K, \label{equa:1}\\
  & h(\overline{x}_{2}) - h(x) \leq \alpha \langle \overline{x}_{2} - z_{2}, x -
  \overline{x}_{2} \rangle, ~ \forall ~ x \in K. \label{equa:2}
 \end{align}
Taking $x = \overline{x}_{2}$ in \eqref{equa:1} and $x = \overline{x}_{1}$ in \eqref{equa:2} and adding them, we have
$$\lVert \overline{x}_{1} - \overline{x}_{2} \rVert^{2} \leq \langle z_{1} - z_{2}, \overline{x}_{1} - \overline{x}_{2} \rangle.$$
Hence, by \cite[Proposition 4.4]{BAC}, $\prox_{h} (K, \cdot)$ is firmly nonexpansive.
\end{proof}

Next we show that every lower semicontinuous and convex function is prox-convex.

\begin{proposition}\label{include:convex}
 \textit{Let $K$ be a closed and convex set in $\mathbb{R}^{n}$ and $h:\mathbb{R}^{n}
 \rightarrow \overline{\mathbb{R}}$ be a proper and lower semicontinuous function
 such that $K \cap \dom \,h \neq \emptyset$. If $h$ is convex on $K$, then $h \in \Phi(K)$
 with $\alpha=1$.}
\end{proposition}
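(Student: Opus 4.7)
The plan is to reduce the claim to the classical convex proximity theory applied to $h + \delta_K$ rather than to $h$ alone, and then to restrict the resulting subdifferential inequality to points of $K$.

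First I would verify that $g := h + \delta_K$ is proper, lower semicontinuous, and convex on $\mathbb{R}^n$. Properness follows from $K \cap \dom h \neq \emptyset$; lower semicontinuity follows from the lower semicontinuity of $h$ together with the closedness of $K$; and convexity follows from the convexity of $h$ on $K$ combined with the convexity of $K$ (for points outside $K$ the inequality is trivial, since the right-hand side is $+\infty$). This places $g$ squarely in the setting where the classical results recalled after \eqref{main:prox} and \eqref{fundamental} apply.

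Next, fix $z \in K$. Because $g$ is proper, lsc, convex, the function $x \mapsto g(x) + \tfrac{1}{2}\|x-z\|^2$ is strongly convex, lsc, and coercive (the quadratic term alone is strongly convex and supercoercive), so it attains its minimum at a unique point $\overline{x}$. Thus $\prox_h(K,z) = \prox_g(z) = \{\overline{x}\}$ is nonempty, giving the first requirement of Definition \ref{all:functions}. Applying the classical characterization \eqref{fundamental} to $g$ at $\overline{x}$ yields $z - \overline{x} \in \partial g(\overline{x})$, that is,
\begin{equation*}
 g(x) \geq g(\overline{x}) + \langle z - \overline{x},\, x - \overline{x}\rangle, \quad \forall\, x \in \mathbb{R}^n.
\end{equation*}

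Restricting this inequality to $x \in K$ gives $h(x) \geq h(\overline{x}) + \langle z-\overline{x},x-\overline{x}\rangle$, which rearranges to
\begin{equation*}
 h(\overline{x}) - h(x) \leq \langle \overline{x} - z,\, x - \overline{x}\rangle, \quad \forall\, x \in K,
\end{equation*}
i.e., condition \eqref{prox:all} with $\alpha = 1$. So $h \in \Phi(K)$ with prox-convex value $1$. There is no real obstacle here; the only subtle point is making sure that the convexity hypothesis on $K$ can be leveraged globally by passing to $h + \delta_K$, which is then handled by the standard theory rather than requiring convexity of $h$ on all of $\mathbb{R}^n$.
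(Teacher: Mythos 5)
Your proof is correct and follows essentially the same route as the paper: both arguments obtain existence and uniqueness of $\overline{x}$ from the strong convexity of $x \mapsto (h+\delta_K)(x) + \tfrac12\lVert x - z\rVert^2$ and then invoke the classical characterization $z - \overline{x} \in \partial(h+\delta_K)(\overline{x})$ to read off \eqref{prox:all} with $\alpha = 1$. You merely make explicit the passage to $g = h + \delta_K$ and the verification of its properness, lower semicontinuity, and convexity, which the paper leaves implicit.
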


\begin{proof}
Since $h$ is convex, the function $x \mapsto h(x) + ({\beta}/{2}) \lVert
 z - x \rVert^{2}$ is strongly convex on $K$ for all $\beta > 0$ and all $z \in
 K$, in particular, for $\beta = 1$. Thus $\prox_{h} (K, z)$ contains exactly
 one element, say $\overline{x} \in \mathbb{R}^{n}$. It follows from
 \cite[Proposition 12.26]{BAC} that $z - \overline{x} \in \partial h
 (\overline{x})$, so relation  \eqref{prox:all} holds for $\alpha = 1$. Therefore,
 $h \in \Phi (K)$.
\end{proof}

Prox-convexity goes beyond convexity as shown below.

\begin{example}\label{main:example}
 Let us consider $K := [0, 1]$ and the continuous and real-valued function $h: K
 \rightarrow \mathbb{R}$ given by $h(x) = - x^{2} - x$. Note that
 \begin{itemize}
  \item[$(i)$] $h$ is strongly quasiconvex without being convex on $K$  (take
  $\beta = 1$);

  \item[$(ii)$] For all $z \in K$, $\prox_{h} (K, z) = {\amin}_{K} h =  \{1\}$;

  \item[$(iii)$] $\partial^{\leq}_{K} h(1) = \mathbb{R}^{n}$ since, by $(ii)$, $S_{h(1)} (h) = \{1\}$, i.e., $\partial^{\leq} h(1) = \mathbb{R}^{n}$ by \eqref{G:charmin};

  \item[$(iv)$] $h$ satisfies con\-di\-tion \eqref{prox:all} for all $\alpha > 0$;

  \item[$(v)$] $h \in \Phi(K)$;

  \item[$(vi)$] more generally, $h \in \Phi([0, r])$ for all $r>0$.
 \end{itemize}
\end{example}

In order to formulate a reverse statement of Proposition \ref{include:convex}, we
note that if $h:\mathbb{R}^{n}\to \overline{\mathbb{R}}$ is a lower semicontinuous and prox-convex function on some set $K \cap \dom \,h \neq \emptyset$ which satisfies
 \eqref{prox:all} for $\alpha = 1$, then $h$ is not ne\-ce\-ssa\-ri\-ly convex. Indeed, the function in Example \ref{main:example} satisfies
\eqref{prox:all} for all $\alpha > 0$, but it is not convex on $K = [0, 1]$.

In the following example, we show that lower semicontinuity is not a necessary condition for prox-convexity. Note also that although the proximity operator of the function mentioned in Remark \ref{exis:is:optimal}$(ii)$ is always empty, this is no longer the case when restricting it to an interval.

\begin{example}\label{no:lsc}
Take $n \geq 3$, $K_{n} := [1, n]$ and the function $h_{n}: K_{n} \rightarrow
 \mathbb{R}$ given by
 $$
  h_{n} (x) = \left \{
  \begin{array}[c]{cl}
  1- x^{3}, & \mathrm{if} ~ 1 \leq x \leq 2, \\
  1- x^{3} - k, & \mathrm{if} ~ k < x \leq k+1, ~ k \in \{2, \ldots, n-1\}.
  \end{array}
  \right.
 $$
Note that $h_{n}$ is neither convex nor lower semicontinuous, but it is quasiconvex on $K_{n}$.
Due to the discontinuity of $h_{n}$, the function $f_{n} (x) = h_{n} (x) +
 ({1}/{2}) \lVert x \rVert^{2}$ is neither convex nor lower semicontinuous on $K_{n}$, hence $h_{n}$ is not $c$-weakly convex (in the sense of \cite{HLO}) either
 and also its subdifferential is not hypomonotone (as defined in \cite{COP, IPS, PEN}). However, for any $z \in K_{n}$, $\prox_{h_{n}} (K_n, z) = \{n\}$, and
 $\partial^{\leq}_{K_{n}} h_{n} (n) = K_{n}$. Therefore, $h_{n} \in \Phi(K_{n})$.
\end{example}

Another example of a prox-convex function that is actually (like the one in Example \ref{main:example}) both concave and DC follows.
\begin{example}\label{ln}
Take $K=[1, 2]$ and $h: (0, +\infty) \to \mathbb{R}$ defined by $h(x)= 5x + \ln (1+10x)$. As specified in \cite{MUQ}, both the prox-convex function presented in Example \ref{main:example} and this one represent cost functions considered in oligopolistic equilibrium problems, being thus relevant for studying also from a practical point of view.
One can show that $\prox_{h} (K, z) = {\amin}_{K} h =  \{1\}$ for all $z \in K$ and \eqref{prox:all} is fulfilled for $\alpha \in (0, 5)$.
\end{example}

\begin{remark}
\begin{itemize}
\item[(i)] One can also construct examples of $c$-weakly convex functions (for some $c>0$) that are not prox-convex, hence these two classes only contain some common elements without one of them being completely contained in the other.

\item[(ii)] While Examples \ref{main:example} and \ref{ln} exhibit prox-convex functions that are also DC, the prox-convex functions presented in Example \ref{no:lsc} are not DC. Examples of DC functions that are not prox-convex can be constructed as well, consequently, like in the case of $c$-weakly convex functions, these two classes only contain some common elements without one of them being completely contained in the other. Note moreover that different to the literature on algorithms for DC optimization problems (see, for instance, \cite{AFV, BBO}) where usually only critical points (and not optimal solutions) of such problems are determinable, for the DC functions that are also prox-convex proximal point methods are capable of delivering global minima (on the considered sets).

\item[(iii)] The remarkable properties of the Kurdyka-\L ojasiewicz (K{\L}) functions made them a standard tool when discussing proximal point type algorithms for nonconvex functions. As their definition requires proper closedness and the prox-convex functions presented in Example \ref{no:lsc} are not closed, one can conclude that the class of prox-convex functions is broader in this sense than the one of K{\L} ones. Similarly one can note that prox-convexity is not directly related to  hypomonotonicity of subdifferentials (see \cite{COP, IPS,PEN}, respectively).

\item[(iv)] At least due to the similar name, a legitimate question is whether the notion of prox-convexity is connected in any way with the
 prox-regularity (cf. \cite{COP, LEW, PEN}). While the latter asks a function to be locally lower semicontinuous around a given point, the notion we introduce in this
 work does not assume any topological properties on the involved function. Another difference with respect to this notion can be noticed in Section \ref{sec:4}, where
 we show that the classical proximal point algorithm remains convergent towards a minimum of the function to be minimized even if this lacks convexity, but is
 prox-convex. On the other hand, the iterates of the modified versions of the proximal point method employed for minimizing prox-regular functions converge
 towards critical points of the latter. Last but not least note that, while in the mentioned works one uses tools specific to nonsmooth analysis such as generalized
 subdifferentials, in this paper we employ the convex subdifferential and some subdifferential notions specific to quasiconvex functions.

\end{itemize}
\end{remark}

Necessary and sufficient hypotheses for condition \eqref{prox:all} are given below.

\begin{proposition}\label{char:proxconvex}
 Let $K$ be a closed set in $\mathbb{R}^{n}$ and $h: \mathbb{R}^{n} \rightarrow \overline{\mathbb{R}}$ be a proper, lower semicontinuous and prox-convex function such that $K \cap \dom \,h \neq \emptyset$. Let $\alpha > 0$ be the prox-convex value of $h$ on $K$,
 and $z \in K$. Consider the following assertions
  \begin{itemize}
   \item[$(a)$]  $\prox_{h} (K, z) = \{\overline{x}\}$; 

   \item[$(b)$] $z - \overline{x} \in \partial_{K} \left(\frac {1}{\alpha}h\right)(\overline{x})$;

   \item[$(c)$] $(\frac{1}{\alpha}h)_{z} (\overline{x}) - (\frac{1}{\alpha}h)_{z}
   (x) \leq - \frac{1}{2} \lVert x - \overline{x} \rVert^{2}$ for all $x \in K$;

   \item[$(d)$] $\overline{x} \in \prox_{\frac{1}{\alpha}h} (K, z)$.
  \end{itemize}
  Then
  $$(a) ~ \Longrightarrow ~ (b) ~ \Longleftrightarrow ~ (c) ~ \Longrightarrow
  ~ (d).$$
  If $\alpha = 1$, then $(d)$ implies $(a)$ and all the statements are equivalent.
\end{proposition}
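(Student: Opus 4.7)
The plan is to run through the implications mostly by direct algebraic manipulation, using only the definition of prox-convexity (Definition \ref{all:functions}), the three-points identity \eqref{3:points}, and the single-valuedness guaranteed by Proposition \ref{single-val}. None of the steps should require any analytic machinery beyond these, so the task is essentially bookkeeping rather than having a single hard core.

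For $(a)\Rightarrow(b)$, I would start from the definition of prox-convexity applied to the unique element $\overline{x}\in\prox_h(K,z)$, namely
$$h(\overline{x}) - h(x) \leq \alpha \langle \overline{x}-z, x-\overline{x}\rangle\quad \forall x\in K,$$
divide by $\alpha>0$, rewrite $\langle \overline{x}-z, x-\overline{x}\rangle = -\langle z-\overline{x}, x-\overline{x}\rangle$, and rearrange to get
$$\tfrac{1}{\alpha}h(x) \geq \tfrac{1}{\alpha}h(\overline{x}) + \langle z-\overline{x}, x-\overline{x}\rangle\quad \forall x\in K,$$
which is exactly $z-\overline{x}\in \partial_K(\tfrac{1}{\alpha}h)(\overline{x})$ (the inequality extends trivially to $x\notin K$ through $\delta_K$).

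For $(b)\Leftrightarrow(c)$, I would apply identity \eqref{3:points} with the substitution $x\to\overline{x}$, $y\to x$, which gives
$$\langle \overline{x}-z, x-\overline{x}\rangle = \tfrac{1}{2}\|z-x\|^2 - \tfrac{1}{2}\|\overline{x}-z\|^2 - \tfrac{1}{2}\|x-\overline{x}\|^2.$$
Substituting this into the subgradient inequality of $(b)$ and rearranging the two squared-norm terms into Moreau-type envelopes of $\tfrac{1}{\alpha}h$ produces precisely the inequality in $(c)$, and the manipulation is reversible. For $(c)\Rightarrow(d)$, since $-\tfrac{1}{2}\|x-\overline{x}\|^2\leq 0$, assertion $(c)$ immediately yields $(\tfrac{1}{\alpha}h)_z(\overline{x})\leq (\tfrac{1}{\alpha}h)_z(x)$ for every $x\in K$, i.e., $\overline{x}\in\prox_{\frac{1}{\alpha}h}(K,z)$.

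Finally, when $\alpha=1$, $(d)$ reduces to $\overline{x}\in\prox_h(K,z)$; since $h$ is prox-convex, Proposition \ref{single-val} forces this set to be a singleton, yielding $(a)$ and closing the cycle. The only place where one might pause is to notice why $(d)\Rightarrow(a)$ fails for general $\alpha$: dividing $h$ by $\alpha\neq 1$ changes the proximity problem, so $\prox_{\frac{1}{\alpha}h}(K,z)$ need not coincide with $\prox_h(K,z)$, which explains the asymmetric conclusion of the statement.
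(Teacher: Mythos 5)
Your proposal is correct and follows essentially the same route as the paper: $(a)\Rightarrow(b)$ directly from the definition of prox-convexity, $(b)\Leftrightarrow(c)$ via the three-points identity \eqref{3:points}, $(c)\Rightarrow(d)$ by dropping the nonpositive term, and $(d)\Rightarrow(a)$ for $\alpha=1$ using single-valuedness. Your closing remark on why the cycle breaks for $\alpha\neq 1$ is a nice addition, consistent with the paper's own counterexample in the remark that follows.
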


\begin{proof}
 $(a) \Rightarrow (b)$: By definition of prox-convexity.

 $(b) \Leftrightarrow (c)$: One has
  \begin{align}
  & z - \overline{x} \in \partial_{K} ( \frac{1}{\alpha} h)(\overline{x}) ~
  \Longleftrightarrow ~ (\frac{1}{\alpha} h) (\overline{x}) - (\frac{1}{\alpha}
  h)(x) \leq \langle \overline{x} - z, x - \overline{x} \rangle, ~ \forall  x
  \in K \notag \\
  & \Longleftrightarrow ~ \frac{1}{\alpha} h(\overline{x}) - \frac{1}{\alpha}
  h(x) \leq \frac{1}{2} \lVert z - x \rVert^{2} - \frac{1}{2} \lVert z -
  \overline{x} \rVert^{2} - \frac{1}{2} \lVert x - \overline{x} \rVert^{2}, ~
  \forall  x \in K \notag \\
  & \Longleftrightarrow ~ \frac{1}{\alpha}h(\overline{x}) + \frac{1}{2} \lVert z
  - \overline{x} \rVert^{2} - \frac{1}{\alpha}h(x) - \frac{1}{2} \lVert z - x
  \rVert^{2} \leq - \frac{1}{2} \lVert x - \overline{x} \rVert^{2}, ~ \forall
  x \in K \notag \\
  & \Longleftrightarrow ~ (\frac{1}{\alpha} h)_{z} (\overline{x}) - (\frac{1}{
  \alpha} h)_{z} (x) \leq - \frac{1}{2} \lVert x - \overline{x} \rVert^{2}, ~
  \forall  x \in K. \label{third:impli}
 \end{align}
 $(c) \Rightarrow (d)$: As $- ({1}/{2}) \lVert x - \overline{x} \rVert^{2}
 \leq 0$ for all $x \in K$, \eqref{third:impli} yields
 $\overline{x}\in \prox_{(1/\alpha) h} (K, z)$.

 When $\alpha=1$, the implication $(d) \Rightarrow (a)$ is straightforward.
\end{proof}

\begin{remark}
 \begin{itemize}
  \item[$(i)$] It follows from Proposition \ref{char:proxconvex}$(d)$ that if $h$ is
  prox-convex on $K$ with prox-convex value $\alpha > 0$, then the function
  $(1/\alpha) h$ is also prox-convex on $K$ with prox-convex value $1$.
  Moreover, $\prox_{(1/\alpha) h} = \prox_{h}$.

  \item[$(ii)$] Assertions $(d)$ and $(a)$ of Proposition \ref{char:proxconvex}
  are not equivalent in the general case. Indeed, let us consider $h$ and $K$ as
  in Example \ref{main:example} and $\alpha = 10$. Take $z = 0$. Then
  $((1/10) h)_{0} (x) = (2/5) x^{2} - (1/10) x$ and
  $\prox_{(1/10) h} (K, 0) = \{1/8\}$ while $\prox_{h} (K, 0)= \{1\}$.
 \end{itemize}
\end{remark}

If $h$ is prox-convex with prox-convex value $\alpha$, then we know that
$\prox_{(1/\alpha) h} = \prox_{h}$ is a singleton, hence
\begin{equation}
 ^{\frac{1}{\alpha}}h(z) = \min_{x \in K} \left(h(x) + \frac{\alpha}{2} \lVert z - x
 \rVert^{2} \right) = h ({\prox}_{h} (z)) + \frac{\alpha}{2} \lVert z -
 {\prox}_{h}(z) \rVert.
\end{equation}

Consequently, $^{1/\alpha}h(z)\in \mathbb{R}$ for all $z\in \mathbb{R}^{n}$.
Furthermore, we have the fo\-llo\-wing statements.

\begin{proposition}
Let $h:\mathbb{R}^{n} \rightarrow \overline{\mathbb{R}}$ be proper, lower semicontinuous and prox-convex with prox-convex value $\alpha>0$ on a closed set $K \subseteq  \mathbb{R}^{n}$ such that $K \cap \dom \,h \neq \emptyset$. Then $^{1/\alpha}h: \mathbb{R}^{n} \rightarrow \mathbb{R}$
 is Fr\'{e}chet differentiable everywhere and
\begin{equation}\label{gradient:formula}
\nabla({^{\frac{1}{\alpha}}}h)=\alpha\left(\id-{\prox}_{\frac{1}{\alpha}h}\right),
\end{equation}
is $\alpha$-Lipschitz continuous.
\end{proposition}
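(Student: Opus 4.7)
The plan is as follows. Set $p:=\prox_{(1/\alpha)h}(K,\cdot)$; by Proposition \ref{single-val} together with the remark following Proposition \ref{char:proxconvex}, $p$ is single-valued, firmly nonexpansive, and coincides with $\prox_{h}(K,\cdot)$. I will exploit the representation ${^{1/\alpha}}h(z)=h(p(z))+\frac{\alpha}{2}\|z-p(z)\|^{2}$ recalled just before the statement, sandwich the increment of ${^{1/\alpha}}h$ between two expressions differing only in second-order terms, and read off both the gradient and its Lipschitz constant.

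To obtain the upper bound, I would plug the suboptimal point $x=p(z)$ into the infimum defining ${^{1/\alpha}}h(z+t)$; symmetrically, taking $x=p(z+t)$ in ${^{1/\alpha}}h(z)$ gives a lower bound. Expanding the resulting quadratic differences via $\|a\|^{2}-\|b\|^{2}=\langle a+b,\,a-b\rangle$ yields
\begin{equation*}
\alpha\langle(z+t)-p(z+t),\,t\rangle-\tfrac{\alpha}{2}\|t\|^{2}\ \leq\ {^{1/\alpha}}h(z+t)-{^{1/\alpha}}h(z)\ \leq\ \alpha\langle z-p(z),\,t\rangle+\tfrac{\alpha}{2}\|t\|^{2}.
\end{equation*}
Subtracting the candidate linear term $\alpha\langle z-p(z),t\rangle$, the upper remainder is directly bounded by $\tfrac{\alpha}{2}\|t\|^{2}$, while the lower remainder simplifies to $\alpha\langle p(z)-p(z+t),t\rangle+\tfrac{\alpha}{2}\|t\|^{2}$; since firm nonexpansivity of $p$ implies $\|p(z+t)-p(z)\|\leq\|t\|$, Cauchy--Schwarz bounds this below by $-\tfrac{\alpha}{2}\|t\|^{2}$. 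Hence the total remainder is $O(\|t\|^{2})=o(\|t\|)$, proving Fr\'echet differentiability with $\nabla({^{1/\alpha}}h)(z)=\alpha(z-p(z))$, i.e.\ the formula \eqref{gradient:formula}.

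For the Lipschitz estimate I would invoke the standard equivalence, immediate from \eqref{FNEM}, that $T$ is firmly nonexpansive if and only if $\id-T$ is. Applied to $T=p$, this shows that $\id-p$ is in particular $1$-Lipschitz, whence $\nabla({^{1/\alpha}}h)=\alpha(\id-p)$ is $\alpha$-Lipschitz.

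The main obstacle I anticipate is ensuring that every step applies at \emph{every} $z\in\mathbb{R}^{n}$ rather than merely at $z\in K$: Definition \ref{all:functions} and Proposition \ref{single-val} furnish single-valuedness and firm nonexpansivity of $p$ only when the arguments lie in $K$, so some additional reasoning may be needed to extend the sandwich estimate outside $K$ (or to restrict the conclusion accordingly). The algebraic core is otherwise routine, and the role of $\alpha$ could alternatively be streamlined via the rescaling ${^{1/\alpha}}h=\alpha\cdot{^{1}}((1/\alpha)h)$, reducing matters to the unit-parameter case.
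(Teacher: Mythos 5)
Your argument is correct and reaches the stated conclusion, but its engine is genuinely different from the paper's. You obtain the two-sided estimate on ${}^{1/\alpha}h(z+t)-{}^{1/\alpha}h(z)$ purely from suboptimality in the infimum defining the envelope (evaluating the objective at the suboptimal points $p(z)$ and $p(z+t)$), so the defining inequality \eqref{prox:all} of prox-convexity never enters directly: it is used only through Proposition \ref{single-val} (and the identification $\prox_{(1/\alpha)h}=\prox_h$) to secure attainment, single-valuedness and firm nonexpansiveness of $p$. This is the classical Moreau-envelope argument, and it shows slightly more: the conclusion holds for any function whose restricted proximity operator is everywhere attained, single-valued and nonexpansive. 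The paper instead feeds \eqref{prox:all} into the identity ${}^{\gamma}h(y)-{}^{\gamma}h(x)=h(\overline y)-h(\overline x)+\tfrac{1}{2\gamma}\left(\lVert y-\overline y\rVert^2-\lVert x-\overline x\rVert^2\right)$ to derive the one-sided subgradient-type inequality \eqref{F1}, ${}^{\gamma}h(y)-{}^{\gamma}h(x)\ge\tfrac{1}{\gamma}\langle y-x,x-\overline x\rangle$, then symmetrizes and invokes firm nonexpansiveness to close the sandwich; this route exhibits $\alpha(x-\overline{x})$ as a global affine minorant slope of the envelope on $K$, a convexity-type by-product your argument does not produce, at the price of leaning on the prox-convexity inequality itself. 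The two proofs coincide in the final step (Lipschitzness of the gradient because $\id-p$ is firmly, hence plainly, nonexpansive, via \eqref{FNE}--\eqref{FNEM}), and your remainder bounds $\pm\tfrac{\alpha}{2}\lVert t\rVert^{2}$ are if anything slightly sharper than the paper's one-sided $\alpha\lVert y-x\rVert^{2}$. The caveat you raise about $z\in K$ versus $z\in\mathbb{R}^{n}$ is well taken, but it is a defect of the statement shared by the paper's own proof, which likewise only manipulates points $x,y\in K$; it does not separate your argument from theirs.
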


\begin{proof}
 Let $x, y \in K$ with $x \neq y$. Set $\gamma = 1/\alpha$,
 $\overline{x} = \prox_{h} (K, x)$ and $\overline{y} = \prox_{h} (K, y)$.
As $h$ is prox-convex with prox-convex value $\alpha$, we have
\begin{align*}
 & h(z) - h(\overline{x}) \geq \alpha \langle x - \overline{x}, z - \overline{x}
 \rangle \, \forall \, z \in K \, \Longrightarrow \, h(\overline{y})
 - h(\overline{x}) \geq \frac{1}{\gamma} \langle x - \overline{x}, \overline{y} -
 \overline{x} \rangle.
\end{align*}

From \eqref{gammah-def}, we get
\begin{align}
 ^{\gamma}h(y)-\,^{\gamma}h(x) & = h(\overline{y}) - h(\overline{x}) +
 \frac{1}{2\gamma} \left( \lVert y - \overline{y} \rVert^{2} - \lVert x -
 \overline{x} \rVert^{2}\right) \notag \\
 & \geq \frac{1}{2 \gamma} \left( 2 \langle x - \overline{x}, \overline{y} -
 \overline{x} \rangle + \lVert y - \overline{y} \rVert^{2} - \lVert x -
 \overline{x} \rVert^{2}\right) \notag \\
 &  =\frac{1}{2 \gamma} \left( \lVert y - \overline{y} - x + \overline{x}
 \rVert^{2} + 2 \langle y-x, x - \overline{x} \rangle \right) \notag \\
 & \geq \frac{1}{\gamma} \langle y - x, x - \overline{x} \rangle. \label{F1}
\end{align}

Exchanging above $x$ with $y$ and $\overline{x}$ with $\overline{y}$, one gets
\begin{equation}\label{F2}
 ^{\gamma}h(x) - \,^{\gamma}h(y) \geq \frac{1}{\gamma} \langle x - y, y -
\overline{y} \rangle.
\end{equation}

It follows from equations \eqref{F1} and \eqref{F2} that
\begin{align*}
 0  &  \leq \,^{\gamma}h(y) - \,^{\gamma} h(x) - \frac{1}{\gamma} \langle y - x,
 x - \overline{x} \rangle \\
 & \leq - \frac{1}{\gamma} \langle x - y, y - \overline{y} \rangle -
 \frac{1}{\gamma} \langle y - x, x - \overline{x} \rangle \\
 & = \frac{1}{\gamma} \lVert y - x \rVert ^{2} + \frac{1}{\gamma} \langle y - x,
 \overline{x} - \overline{y} \rangle.
\end{align*}

As $\prox_{K, h}$ is firmly nonexpansive on $K$, $\langle y - x, \overline{y} -
\overline{x} \rangle \geq \lVert \overline{y} - \overline{x} \rVert^{2} \geq 0$,
then
\begin{align*}
 & 0 \leq \,^{\gamma}h(y) - \,^{\gamma}h(x) - \frac{1}{\gamma} \langle y - x,
 x - \overline{x} \rangle \leq \frac{1}{\gamma} \lVert y - x \rVert^{2} \\
 & \Longrightarrow ~ \lim_{y \rightarrow x} \frac{^{\gamma}h(y) -
 \,^{\gamma}h(x) - \frac{1}{\gamma} \langle y - x, x - \overline{x} \rangle}{
 \lVert y - x \rVert}  = 0.
\end{align*}

Thus, $^{1/\alpha}h$ is Fr\'{e}chet differentiable at every $x\in \mathbb{R}^{n}$,
and $\nabla(^{1/\alpha}h)=\alpha(  \id -$ $\prox_{h})$. Since $\prox_{h}$ is
firmly nonexpansive, $\id-\prox_{h}$ is also firmly
nonexpansive, so $\nabla(^{1/\alpha}h)$ is $\alpha$-Lipschitz continuous.
\end{proof}

\subsection{Strongly G-subdifferentiable functions}

Further we introduce and study a class of quasiconvex functions whose lower semicontinuous members are prox-convex.

\begin{definition}\label{def:proxqcx}
 Let $K$ be a closed and convex set in $\mathbb{R}^{n}$ and $h: \mathbb{R}^{n}
 \rightarrow \overline{\mathbb{R}}$ be a proper and lower semicontinuous
 function such that $K \cap \dom \,h \neq \emptyset$. We call $h$ strongly G-subdifferentiable on $K$ if
 \begin{itemize}
  \item[$(a)$] $h$ is strongly quasiconvex on $K$ for some $\beta \in [1, + \infty[$;

  \item[$(b)$] for each $z \in K$ there exists $\overline{x}\in \mathbb{R}^{n}$ such that  $\prox_{h} (K, z) = \{\overline{x}\}$ and
  \begin{equation}\label{quali:cond}
  \frac{1}{2} (z - \overline{x}) \in \partial^{\leq}_{K} h(\overline{x}).
  \end{equation}
 \end{itemize}
\end{definition}

Next we show that a lower semicontinuous and strongly G-subdifferentiable function on $K$ is prox-convex.

\begin{proposition}\label{prox:qcx-phi}
 \textit{Let $K$ be a closed and convex set in $\mathbb{R}^{n}$ and $h: \mathbb{R}^{n}
 \rightarrow \overline{\mathbb{R}}$ be a proper and lower semicontinuous function
 such that $K \cap \dom \,h \neq \emptyset$. If $h$ is strongly G-subdifferen\-tia\-ble
 on $K$, then $h \in \Phi (K)$.}
\end{proposition}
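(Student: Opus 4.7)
The plan is to fix $z \in K$, denote by $\overline{x}$ the unique element of $\prox_h(K, z)$ (a singleton by hypothesis of strong G-subdifferentiability), and verify the prox-convexity inequality \eqref{prox:all} with $\alpha = 1/2$. For arbitrary $x \in K$, write $c := \langle \overline{x} - z, x - \overline{x}\rangle$ and split the analysis into the two cases $h(x) \leq h(\overline{x})$ and $h(x) > h(\overline{x})$.

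The first case is handled directly by the Gutiérrez information: since $x \in K$ lies in the relevant sublevel set, applying $\tfrac{1}{2}(z - \overline{x}) \in \partial^{\leq}_K h(\overline{x})$ at $y = x$ yields $h(x) \geq h(\overline{x}) + \tfrac{1}{2}\langle z - \overline{x}, x - \overline{x}\rangle$, which rearranges to $h(\overline{x}) - h(x) \leq c/2 = \alpha c$, as required.

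The main obstacle is the second case, where $x$ is outside the sublevel set and the Gutiérrez subdifferential is silent. I would introduce a perturbation: for $\lambda \in (0, 1]$, set $x_\lambda := (1 - \lambda)\overline{x} + \lambda x \in K$ (using convexity of $K$), apply the prox-minimality inequality $h(\overline{x}) + \tfrac{1}{2}\|\overline{x} - z\|^2 \leq h(x_\lambda) + \tfrac{1}{2}\|x_\lambda - z\|^2$, expand $\|x_\lambda - z\|^2 = \|\overline{x} - z\|^2 + 2\lambda c + \lambda^2\|x - \overline{x}\|^2$, and bound $h(x_\lambda)$ from above by strong quasiconvexity with parameter $\beta \geq 1$ (here $\max\{h(x), h(\overline{x})\} = h(x)$). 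Collecting terms leads to
\begin{equation*}
h(\overline{x}) - h(x) \leq \lambda c + \tfrac{1}{2}\|x - \overline{x}\|^2\,\lambda\bigl[(1+\beta)\lambda - \beta\bigr] \qquad \forall\,\lambda \in (0, 1].
\end{equation*}
Choosing $\lambda^{*} := \beta/(1+\beta) \in [1/2, 1)$ kills the bracket and leaves $h(\overline{x}) - h(x) \leq \tfrac{\beta}{1+\beta}\,c$.

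To close: in the second case $h(\overline{x}) - h(x) < 0$ automatically, so if $c \geq 0$ then $\alpha c \geq 0 > h(\overline{x}) - h(x)$, while if $c < 0$ then $\alpha c = c/2 \geq \tfrac{\beta}{1+\beta}\,c \geq h(\overline{x}) - h(x)$, the sign-flip relying on $\tfrac{\beta}{1+\beta} \geq \tfrac{1}{2}$, which is precisely the content of $\beta \geq 1$. Combining both cases delivers \eqref{prox:all} with $\alpha = 1/2$, i.e., $h \in \Phi(K)$. The crux of the argument is therefore the perturbation-and-optimal-$\lambda$ device in Case B, which converts prox-minimality plus strong quasiconvexity into a linear upper bound on $h(\overline{x}) - h(x)$, and whose compatibility with the Gutiérrez bound of Case A hinges precisely on $\beta \geq 1$ so that the Case A constant $1/2$ and the Case B constant $\beta/(1+\beta)$ can be reconciled under a single $\alpha$.
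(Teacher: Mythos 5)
Your proof is correct and follows essentially the same route as the paper's: the same two-case split on whether $x$ lies in $S_{h(\overline{x})}(h)$, with the Guti\'errez condition $\tfrac{1}{2}(z-\overline{x})\in\partial^{\leq}_K h(\overline{x})$ handling the inside case and the segment perturbation $x_\lambda=(1-\lambda)\overline{x}+\lambda x$ combined with strong quasiconvexity handling the outside case. The only (cosmetic) difference is that the paper simply fixes $\lambda=1/2$, where $\beta\geq 1$ already makes the quadratic term nonpositive and gives $h(\overline{x})-h(x)\leq\tfrac{1}{2}\langle\overline{x}-z,x-\overline{x}\rangle$ directly, whereas you optimize to $\lambda^{*}=\beta/(1+\beta)$ and then reconcile the constants $\beta/(1+\beta)$ and $1/2$ by a sign argument; both yield $\alpha=1/2$.
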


\begin{proof}
 Let $h$ be a lower semicontinuous and strongly G-subdifferentiable function. Then for every $z
 \in K$, there exists $\overline{x} \in K$ with $\overline{x} = \prox_{h}
 (K, z)$. Hence, given any $y \in K$, we take $y_{\lambda} = \lambda y + (1 -
 \lambda) \overline{x}$ with $\lambda \in [0, 1]$. Thus, by the definition of the proximity operator and the strong quasiconvexity of $h$ on $K$ for some $\beta \geq 1$, we have
 \begin{align*}
  h(\overline{x}) & \leq h(\lambda y + (1 - \lambda) \overline{x}) + \frac{1}{2}
  \lVert \lambda y + (1 - \lambda) \overline{x} - z \rVert^{2} - \frac{1}{2}
  \lVert \overline{x} - z \rVert^{2}\\
  & = h(\lambda y + (1 - \lambda) \overline{x}) + \lambda \langle \overline{x} -
  z, y - \overline{x} \rangle + \lambda^{2} \frac{1}{2} \lVert y - \overline{x} \rVert^{2} \\
  & \leq \max\{h(y), h(\overline{x})\} + \lambda \langle \overline{x} - z, y -
  \overline{x} \rangle + \frac{\lambda}{2}(\lambda \beta + \lambda - \beta) \lVert
  y - \overline{x} \rVert^{2}.
 \end{align*}
 We have two possible cases.
 \begin{itemize}
  \item[$(i)$] If $h(y) > h(\overline{x})$, then
  $$h(\overline{x}) - h(y) \leq \lambda \langle \overline{x} - z, y - \overline{x}
   \rangle + \frac{\lambda}{2}(\lambda \beta + \lambda - \beta) \lVert y -
   \overline{x} \rVert^{2},   \ \forall  \lambda \in [0, 1].$$
Hence, for $\lambda = 1/2$ and since $\beta \geq 1$, one has
  \begin{align*}
   h(\overline{x}) - h(y) & \leq \frac{1}{2} \langle \overline{x} - z, y -
  \overline{x} \rangle + \frac{1}{4} (\frac{1}{2} - \frac{\beta}{2}) \lVert y -
  \overline{x} \rVert^{2} \\
  & \leq \frac{1}{2} \langle \overline{x} - z, y - \overline{x} \rangle, \ \forall
  y \in K \backslash S_{h(\overline{x})} (h).
  \end{align*}

  \item[$(ii)$] If $h(y) \leq h(\overline{x})$, then $y \in S_{h(\overline{x})}
  (h)$, it follows from Definition \ref{def:proxqcx}$(b)$ that
\end{itemize}
  \begin{align*}
   \frac{1}{2} (z - \overline{x}) \in \partial^{\leq}_{K} h(\overline{x})
   \Longleftrightarrow h(\overline{x}) - h(y) \leq \frac{1}{2} \langle \overline{x}
   - z, y - \overline{x} \rangle, \ \forall y \in K \cap S_{h(\overline{x})} (h).
  \end{align*}
Therefore, it follows that $h$ satisfies \eqref{prox:all} for $\alpha = {1}/{2}$, i.e., $h \in \Phi(K)$.
\end{proof}

\begin{remark}\label{validity}
 \begin{itemize}
  \item[$(i)$] When $h:\mathbb{R}^{n}\to \overline{\mathbb{R}}$ is lower semicontinuous and strongly quasiconvex, as strongly quasiconvex functions are
  semistrictly quasiconvex, $h$ is quasiconvex and every local minimum of $h$ is a
  global minimum, too, so $h$ is neatly quasiconvex, i.e., $\partial^{<} h =
  \partial^{\leq} h$ (see \cite[Pro\-po\-si\-tion 9]{PE1}). Therefore, we can
  replace $\partial^{\leq}_{K} h$ by $\partial^{<}_{K} h$ in condition
  \eqref{quali:cond}.

  \item[$(ii)$] Strongly $G$-subdifferentiable functions are not necessarily convex as the function in Example \ref{main:example} shows.
 \end{itemize}
\end{remark}




A family of prox-convex functions that are not strongly $G$-subdifferentiable can be found in Remark \ref{rem3.6}, see also Example \ref{no:lsc}.

Now, we study lower semicontinuous strongly quasiconvex functions for which the Gutierr\'ez subdifferential is nonempty. To that end, we first recall the
fo\-llo\-wing definitions (adapted after \cite[Definition 3.1]{CFZ}).

\begin{definition}
 Let $K$ be a nonempty set in $\mathbb{R}^{n}$ and $h: \mathbb{R}^{n} \rightarrow \overline{\mathbb{R}}$ with $K \cap \dom \,h \neq \emptyset$. We say that $h$ is
 \begin{itemize}
  \item[$(a)$] \textit{$\inf$-compact} on $K$ if for all $\overline{x} \in K$,  $S_{h(\overline{x})} (h) \cap K$ is compact;

 \item[$(b)$] \textit{$\alpha$-quasiconvex} at $\overline{x} \in K$ $(\alpha \in
  \mathbb{R})$, if there exist $\rho > 0$ and $e \in \mathbb{R}^{n}$, $\lVert e
  \rVert = 1$, such that
  \begin{equation}\label{CFZ:condition}
   y \in K \cap \mathbb{B} (\overline{x}, \rho) \cap S_{h(\overline{x})} (h)
   \Longrightarrow ~ \langle y - \overline{x}, e \rangle \geq \alpha \lVert y
   - \overline{x} \rVert^{2};
 \end{equation}
  \item[$(c)$] \textit{positively quasiconvex} on $K$ if for any $\overline{x}$
  there exists $\alpha(\overline{x}) > 0$ such that $h$ is
  $\alpha(\overline{x})$-quasiconvex on $S_{h(\overline{x})} (h)$.
 \end{itemize}
\end{definition}

The following result presents a connection between strongly quasiconvex functions
and positively quasiconvex ones.

\begin{proposition}\label{strong:qcx}
 Let $h: \mathbb{R}^{n} \rightarrow \mathbb{R}$ be a strongly quasiconvex
 function, $\overline{x} \in \mathbb{R}^{n}$ and $\alpha > 0$. Then the following
 assertions hold
 \begin{itemize}
  \item[$(a)$] If $\xi \in \partial \left(({1}/{\alpha}) h\right)(\overline{x})$, then
  \begin{equation}
   \langle \xi, y - \overline{x} \rangle \leq - \frac{\beta}{2 \alpha} \lVert y -
   \overline{x} \rVert^{2}, ~ \forall  y \in S_{h(\overline{x})} (h).
  \end{equation}

  \item[$(b)$] If $\xi \in \partial^{\leq} h(\overline{x})$, then
  \begin{equation}
   \langle \xi, y - \overline{x} \rangle \leq - \frac{\beta}{2} \lVert y -
   \overline{x} \rVert^{2}, \ \forall  y \in S_{h(\overline{x})} (h).
   \end{equation}
  \end{itemize}
 As a consequence, in both cases, $h$ is positively quasiconvex on
 $\mathbb{R}^{n}$.
\end{proposition}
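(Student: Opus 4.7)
The proof splits naturally into parts (a), (b) and the concluding positive-quasiconvexity statement, the first two relying on the same schema: interpolate between $\overline{x}$ and $y$, use strong quasiconvexity to get an upper bound on $h$ along the segment, apply the appropriate subgradient inequality at the interpolated point, then let the interpolation parameter shrink to zero.

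For part (a), I fix $y\in S_{h(\overline{x})}(h)$, set $z_\lambda:=\lambda y+(1-\lambda)\overline{x}$ for $\lambda\in (0,1)$, and use strong quasiconvexity together with $h(y)\le h(\overline{x})$ to get
\begin{equation*}
 h(z_\lambda)\le h(\overline{x})-\lambda(1-\lambda)\tfrac{\beta}{2}\|y-\overline{x}\|^2.
\end{equation*}
Since $\xi\in\partial((1/\alpha)h)(\overline{x})$ is a global convex subgradient, testing it at $z_\lambda$ yields $(1/\alpha)h(z_\lambda)\ge (1/\alpha)h(\overline{x})+\lambda\langle\xi,y-\overline{x}\rangle$. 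Chaining the two estimates, dividing by $\lambda>0$, and sending $\lambda\to 0^+$ produces the desired inequality with constant $-\beta/(2\alpha)$.

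For part (b) the argument is essentially the same, with one subtle checkpoint: a Guti\'errez subgradient $\xi\in\partial^{\leq}h(\overline{x})$ only controls values of $h$ on the sublevel set $S_{h(\overline{x})}(h)$, so I must first verify that $z_\lambda$ lies there. This, however, is precisely a by-product of the strong-quasiconvexity bound above (the right-hand side is $\le h(\overline{x})$). Once this membership is in hand, the same interpolate–divide–pass-to-the-limit scheme yields $\langle\xi,y-\overline{x}\rangle\le -\tfrac{\beta}{2}\|y-\overline{x}\|^2$.

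For the positively-quasiconvex consequence, given $\overline{x}$ I pick any nonzero $\xi$ from the corresponding subdifferential and set $e:=-\xi/\|\xi\|$, $\alpha(\overline{x}):=\beta/(2\|\xi\|)$ (respectively $\beta/(2\alpha\|\xi\|)$ for part (a)), and any $\rho>0$; the inequality established in (a) or (b) rewrites as $\langle y-\overline{x},e\rangle\ge \alpha(\overline{x})\|y-\overline{x}\|^2$ for all $y\in S_{h(\overline{x})}(h)$, which is the definition of $\alpha(\overline{x})$-quasiconvexity at $\overline{x}$. The main obstacle is the legitimacy check in (b) described above; everything else is a short interpolation/limiting computation, and the consequence is a cosmetic normalization of any nonzero subgradient.
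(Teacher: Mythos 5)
Your proof is correct and follows essentially the same route as the paper's: interpolate $z_\lambda=\lambda y+(1-\lambda)\overline{x}$, combine the strong-quasiconvexity estimate with the subgradient inequality at $z_\lambda$, divide by $\lambda>0$ and let $\lambda\downarrow 0$, then normalize a nonzero subgradient ($e=-\xi/\lVert\xi\rVert$, $\alpha(\overline{x})=\beta/(2\lVert\xi\rVert)$ resp. $\beta/(2\alpha\lVert\xi\rVert)$) to obtain positive quasiconvexity. Your explicit check that $z_\lambda\in S_{h(\overline{x})}(h)$ in part (b) fills in a detail the paper elides (it only writes out (a) and declares (b) ``similar''); the one point you leave implicit is the degenerate case $\overline{x}\in{\amin}_{\mathbb{R}^{n}}h$, where the only available subgradient may be $0$, but then $S_{h(\overline{x})}(h)=\{\overline{x}\}$ by strong quasiconvexity and the $\alpha$-quasiconvexity condition holds trivially, exactly as the paper notes.
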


\begin{proof}
 The proofs are similar, so we only show $(a)$. Take $\overline{x} \in
 \mathbb{R}^{n}$
 and $\xi \in \partial\left(({1}/{\alpha}) h\right)(\overline{x})$. Then,
 $$\alpha \langle \xi, z - \overline{x} \rangle \leq h(z) - h(\overline{x}), \ \forall  z \in \mathbb{R}^{n}.$$
 Take $y \in S_{h(\overline{x})} (h)$ and $z = \lambda y + (1 - \lambda)
 \overline{x}$ with $\lambda \in [0, 1]$. Then
 \begin{align}
  \lambda \alpha \langle \xi, y - \overline{x} \rangle & \leq h(\lambda y + (1 -
  \lambda) \overline{x}) - h(\overline{x}) \notag \\
  & \leq \max\{h(y), h(\overline{x})\} - \lambda (1 - \lambda) \frac{\beta}{2}
  \lVert y - \overline{x} \rVert^{2} - h(\overline{x}) \notag \\
  & = - \lambda (1 - \lambda) \frac{\beta}{2} \lVert y - \overline{x} \rVert^{2}.
  \notag
 \end{align}
 Then, for every $y \in S_{h(\overline{x})} (h)$, by dividing by $\lambda > 0$
 and taking the limit when $\lambda$ descends towards $0$, we have
\begin{align}
 \langle \xi, y - \overline{x} \rangle \leq \lim_{\lambda \downarrow 0} \left(-
 (1 - \lambda) \frac{\beta}{2 \alpha} \lVert y - \overline{x} \rVert^{2}\right) =
 - \frac{\beta}{2\alpha} \lVert y - \overline{x} \rVert^{2}. \notag
\end{align}

Now, since $h$ is strongly quasiconvex, ${\amin}_{\mathbb{R}^{n}} h$ has at
most one point. If $\overline{x} \in {\amin}_{\mathbb{R}^{n}} h$, then
condition \eqref{CFZ:condition} holds immediately. If $\overline{x} \not\in
{\amin}_{\mathbb{R}^{n}} h$, then $\xi \neq 0$, i.e., condition
\eqref{CFZ:condition} holds for ${\beta}/{(2 \alpha \lVert \xi \rVert)} > 0$.

Therefore, $h$ is positively quasiconvex on $\mathbb{R}^{n}$.
\end{proof}

As a consequence, we have the following result.

\begin{corollary}\label{cor3.1}
 Let $h: \mathbb{R}^{n} \rightarrow \mathbb{R}$ be a lower semicontinuous and
 strongly quasiconvex function with $\beta = 1$, let $z \in \mathbb{R}^{n}$
 and $\overline{x} \in \prox_{h} (z)$. If there exists $\xi \in \partial^{\leq}
 h(\overline{x})$ such that
 \begin{equation}
  h_{z} (\overline{x}) - h_{z} (x) \leq \langle \xi, y - \overline{x} \rangle, \  \forall  y \in S_{h(\overline{x})} (h),
 \end{equation}
 then $h$ is prox-convex on its sublevel set at the height $h(\overline{x})$,
 i.e., $h \in \Phi(S_{h(\overline{x})} (h))$.
\end{corollary}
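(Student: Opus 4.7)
The plan is to recognize the hypotheses of this corollary as certifying that $h$ is strongly $G$-subdifferentiable on $K := S_{h(\overline{x})}(h)$, so that Proposition \ref{prox:qcx-phi} can be applied directly to obtain $h \in \Phi(K)$. The first step is to observe that $K$ is closed and convex: closedness follows from the lower semicontinuity of $h$, and convexity from the fact that strong quasiconvexity implies quasiconvexity, so every sublevel set of $h$ is convex. Condition $(a)$ of Definition \ref{def:proxqcx} then follows at once, since the strong quasiconvexity of $h$ with $\beta = 1 \in [1,+\infty)$ passes to the convex subset $K$.

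Next I would verify condition $(b)$ of Definition \ref{def:proxqcx}. By Remark \ref{validity}$(i)$, the lower semicontinuity together with strong quasiconvexity makes $h$ neatly quasiconvex, hence $\partial^{\leq} h = \partial^{<} h$. The given $\xi \in \partial^{\leq} h(\overline{x})$, combined with the three-point identity \eqref{3:points}, permits one to rearrange the supplied inequality involving $h_z$ into precisely the form required by \eqref{quali:cond} at $\overline{x}$, namely $\tfrac{1}{2}(z - \overline{x}) \in \partial^{\leq}_K h(\overline{x})$. Proposition \ref{strong:qcx} further sharpens this subdifferential information on all of $K$, guaranteeing a quadratic-decay inequality for elements of $\partial^{\leq} h$ evaluated on points of $S_{h(\overline{x})}(h) = K$.

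The remaining ingredients of condition $(b)$ for an arbitrary $z' \in K$ are the existence and uniqueness of $\prox_h(K, z')$. Uniqueness follows by a direct argument: if two minimizers $\overline{y}_{1}, \overline{y}_{2} \in K$ existed, the parallelogram identity from the preliminaries applied to $\tfrac{1}{2}\|\cdot - z'\|^{2}$, combined with the strong quasiconvex inequality for $h$ with $\beta = 1$ evaluated at the midpoint $\tfrac{1}{2}(\overline{y}_{1} + \overline{y}_{2}) \in K$, yields a value strictly smaller than the purported minimum, a contradiction. Existence follows from the lower semicontinuity of $h + \tfrac{1}{2}\|\cdot - z'\|^{2}$ together with the boundedness of the sublevel sets of the strongly quasiconvex function $h$, which makes $K$ compact.

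The main obstacle, as anticipated, is transferring the pointwise Guti\'errez subdifferential information from the distinguished $\overline{x}$ to the prox point $\overline{y}$ associated with a generic $z' \in K$, so that \eqref{quali:cond} holds at $\overline{y}$. I expect this to follow from the semistrict quasiconvexity inherent in strong quasiconvexity together with the nesting of sublevel sets $S_{h(\overline{y})}(h) \subseteq K$, so that the Guti\'errez subgradients can be produced by arguments analogous to Lemma \ref{nonempty:subd}$(b)$ and Proposition \ref{strong:qcx} applied on each such sublevel set. Once strong $G$-subdifferentiability on $K$ is secured, Proposition \ref{prox:qcx-phi} delivers $h \in \Phi(K)$ with prox-convex value $\alpha = 1/2$, concluding the argument.
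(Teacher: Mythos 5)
Your reduction to strong G-subdifferentiability does not go through: the step you yourself flag as ``the main obstacle'' is a genuine gap, not a technicality to be filled in later. Definition \ref{def:proxqcx}$(b)$ demands that for \emph{every} $z'\in K=S_{h(\overline{x})}(h)$ the set $\prox_{h}(K,z')$ be a singleton $\{\overline{y}\}$ with the specific vector $\tfrac12(z'-\overline{y})$ lying in $\partial^{\leq}_{K}h(\overline{y})$. The hypothesis of the corollary supplies one subgradient $\xi\in\partial^{\leq}h(\overline{x})$ at the one point $\overline{x}=\prox_{h}(z)$; it says nothing about Guti\'errez subgradients at any other point, and in particular cannot produce $\tfrac12(z'-\overline{y})$ as an element of $\partial^{\leq}_{K}h(\overline{y})$ for generic $z'$. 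Proposition \ref{strong:qcx} cannot close this hole, since it only describes the inequality a Guti\'errez subgradient satisfies \emph{once one is given}; and Lemma \ref{nonempty:subd}$(b)$, which you invoke ``analogously'', requires a Lipschitz hypothesis that is not assumed here and in any case yields \emph{some} nonzero subgradient rather than the prescribed one. So condition \eqref{quali:cond} at the prox points of arbitrary $z'\in K$ is simply not available from the data, and Proposition \ref{prox:qcx-phi} cannot be applied. (Your midpoint argument for uniqueness of $\prox_{h}(K,z')$ is also shakier than stated: with $\beta=1$ the gain $\tfrac18\lVert\overline{y}_1-\overline{y}_2\rVert^2$ from strong quasiconvexity must be weighed against $\max\{h(\overline{y}_1),h(\overline{y}_2)\}$ replacing a convex combination, and the strict inequality does not drop out immediately.)

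The paper's proof is much more direct and avoids Definition \ref{def:proxqcx} entirely. By Proposition \ref{strong:qcx}$(b)$ with $\beta=1$, the given $\xi\in\partial^{\leq}h(\overline{x})$ satisfies $\langle\xi,y-\overline{x}\rangle\leq-\tfrac12\lVert y-\overline{x}\rVert^{2}$ for all $y\in S_{h(\overline{x})}(h)$; chaining this with the assumed inequality gives $h_{z}(\overline{x})-h_{z}(y)\leq-\tfrac12\lVert y-\overline{x}\rVert^{2}$ on the sublevel set, and expanding the squared norms (the computation $(b)\Leftrightarrow(c)$ of Proposition \ref{char:proxconvex} with $\alpha=1$) turns this into exactly the prox-convex inequality \eqref{prox:all}. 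If you want to keep your route, you would have to assume the subdifferential condition \eqref{quali:cond} at every prox point of $K$ --- which is precisely the extra hypothesis the corollary is designed to avoid.
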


\begin{proof}
 If $\xi \in \partial^{\leq} h(\overline{x})$, and since $h$ is lower
 semicontinuous and strongly quasiconvex with $\beta = 1$, then by
 Proposition \ref{strong:qcx}$(b)$, we have
 \begin{align}
  & h_{z} (\overline{x}) - h_{z} (x) \leq \langle \xi, y - \overline{x} \rangle
  \leq - \frac{1}{2} \lVert y - \overline{x} \rVert^{2}, \ \forall  y \in
  S_{h(\overline{x})} (h), \notag \\
  & \Longrightarrow ~ h(\overline{x}) - h(x) \leq \frac{1}{2} \lVert z - y
  \rVert^{2} - \frac{1}{2} \lVert z - \overline{x} \rVert^{2} - \frac{1}{2}
  \lVert y - \overline{x} \rVert^{2}, \ \forall  y \in S_{h(\overline{x})} (h)
  \notag \\
  & \Longleftrightarrow ~ h(\overline{x}) - h(x) \leq \langle \overline{x} - z,
  x - \overline{x} \rangle, \ \forall  y \in S_{h(\overline{x})} (h). \notag
 \end{align}
Therefore, $h \in \Phi(S_{h(\overline{x})} (h))$.
\end{proof}

Another consequence is the following sufficient condition for $\inf$-compactness under an $L$-Lipschitz assumption, which revisits \cite[Corollary 1]{VI1}.

\begin{corollary}
 Let $h: \mathbb{R}^{n} \rightarrow \mathbb{R}$ be an $L$-Lipschitz and strong\-ly quasiconvex func\-tion. Then $h$ is $\inf$-compact on $\mathbb{R}^{n}$.
\end{corollary}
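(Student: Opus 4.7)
The plan is to show that for every $\overline{x}\in\mathbb{R}^n$ the sublevel set $S_{h(\overline{x})}(h)$ is contained in an explicit Euclidean ball around $\overline{x}$, and then use the continuity of $h$ (which follows from the $L$-Lipschitz hypothesis) to conclude closedness, hence compactness.

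First I would dispose of the degenerate case. Since $h$ is strongly quasiconvex on $\mathbb{R}^n$, it has at most one minimizer. If $\overline{x} \in \amin_{\mathbb{R}^n} h$, then every other $y$ with $h(y) \leq h(\overline{x})$ would also be a minimizer, forcing $S_{h(\overline{x})}(h) = \{\overline{x}\}$, which is trivially compact. So I may assume $\overline{x} \notin \amin_{\mathbb{R}^n} h$.

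Next I need to produce a useful element of $\partial^{\leq} h(\overline{x})$. Since $h$ is lower semicontinuous (being $L$-Lipschitz) and strongly quasiconvex, it is semistrictly quasiconvex, hence neatly quasiconvex (as recalled in the preliminaries and in Remark \ref{validity}$(i)$). Lemma \ref{nonempty:subd}$(b)$ then yields $\partial^{\leq} h(\overline{x}) \neq \emptyset$, and it further tells me that whenever a nonzero element is present, I can rescale it to have norm exactly $L$. Such a nonzero element exists because $\overline{x}$ is not a minimizer and the characterization \eqref{G:charmin} gives $0 \notin \partial^{\leq} h(\overline{x})$. So I obtain $\xi \in \partial^{\leq} h(\overline{x})$ with $\lVert \xi \rVert = L$.

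Then I would combine Proposition \ref{strong:qcx}$(b)$ with the Cauchy--Schwarz inequality. The former gives $\langle \xi, y - \overline{x}\rangle \leq -\frac{\beta}{2} \lVert y - \overline{x}\rVert^2$ for every $y \in S_{h(\overline{x})}(h)$, while the latter yields $\langle \xi, y - \overline{x}\rangle \geq - L\lVert y - \overline{x}\rVert$. Chaining these produces $\lVert y - \overline{x}\rVert \leq 2L/\beta$, so $S_{h(\overline{x})}(h) \subseteq \mathbb{B}(\overline{x}, 2L/\beta)$. Continuity of $h$ makes this sublevel set closed, hence compact, which is the claim. The only step where I expect a reader could trip is the verification that the hypotheses of Lemma \ref{nonempty:subd}$(b)$ apply, i.e.\ that strong quasiconvexity plus lower semicontinuity implies neat quasiconvexity; once that is in hand the argument is essentially a one-line estimate.
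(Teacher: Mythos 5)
Your proof is correct, and it diverges from the paper's in a meaningful way. Both arguments share the same opening: lower semicontinuity (from Lipschitz continuity) plus strong quasiconvexity gives neat quasiconvexity, Lemma \ref{nonempty:subd}$(b)$ gives a nonempty Guti\'errez subdifferential, and Proposition \ref{strong:qcx}$(b)$ supplies the inequality $\langle \xi, y-\overline{x}\rangle \leq -\tfrac{\beta}{2}\lVert y-\overline{x}\rVert^2$ on the sublevel set. Where you part ways is the conclusion: the paper passes through the notion of positive quasiconvexity and then invokes the external result \cite[Corollary 3.6]{CFZ} to get $\inf$-compactness, whereas you finish by hand, normalizing $\xi$ to $\lVert\xi\rVert = L$ (using the rescaling clause of Lemma \ref{nonempty:subd}$(b)$ together with \eqref{G:charmin} to rule out $\xi = 0$ off the minimizer) and chaining with Cauchy--Schwarz to obtain the explicit bound $S_{h(\overline{x})}(h)\subseteq \mathbb{B}(\overline{x}, 2L/\beta)$; closedness then follows from continuity. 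Your route is self-contained and quantitative --- it produces an explicit radius for the sublevel sets, which the citation-based argument does not --- at the cost of a slightly longer write-up and the small extra case analysis at the minimizer (which is harmless, since uniqueness of the minimizer of a strongly quasiconvex function collapses that sublevel set to a singleton). Both proofs are valid; yours could even be preferred as it removes a dependency on \cite{CFZ} for this corollary.
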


\begin{proof}
 If $h$ is strongly quasiconvex, then $h$ is neatly quasiconvex, and since $h$ is
 $L$-Lipschitz, $\partial^{\leq} h (x) \neq \emptyset$ for all $x \in \mathbb{R}^{n}$
 by Lemma \ref{nonempty:subd}$(b)$. Now, by Proposition \ref{strong:qcx}$(b)$, it
 follows that $h$ is positively quasiconvex on $\mathbb{R}^n$. Finally, $h$ is
 $\inf$-compact on $\mathbb{R}^{n}$ by \cite[Corollary 3.6]{CFZ}.
\end{proof}

We finish this section with the following observation.

\begin{remark}\label{rem3.6}
 There are (classes of) prox-convex functions which are neither convex nor strongly
 quasiconvex. Indeed, for all $n \in \mathbb{N}$, we take $K_{n} := [-n, +
 \infty[$ and the conti\-nuous quasiconvex functions $h_{n}: K_{n} \rightarrow
 \mathbb{R}$ given by $h_{n} (x) = x^{3}$. Clearly, $h_{n}$ is neither convex nor
 strongly quasiconvex on $K_{n}$ hence also not strongly G-subdifferentiable either.

 Take $n \in \mathbb{N}$. Then for all $z \in K_{n}$, ${\amin}_{K_{n}} h_{n} =
 \prox_{h_{n}} (z) = \{-n\}$, thus $S_{h_{n} (\overline{x})} (h_{n}) = \{
 \overline{x}\}$, i.e., $\partial^{\leq}_{K_{n}} h_{n} (\overline{x}) =
 \mathbb{R}^{n}$.  Therefore, $h_{n} \in \Phi(K_{n})$ for all $n \in \mathbb{N}$. Taking also into consideration Corollary \ref{cor3.1} one can conclude that the classes of strongly quasiconvex and prox-convex functions intersect without being included in one another.
 \end{remark}
\begin{remark}\label{rem3.7}
All the prox-convex functions we have identified so far are semi\-strict\-ly quasiconvex, too, while there are semi\-strict\-ly quasiconvex functions that are not prox-convex (for instance $h:\mathbb{R}\to \mathbb{R}$ defined by $h(x) = 1$ if $x=0$ and $h(x) = 0$ if $x \neq 0$), hence the connection between the classes of prox-convex and semistrictly quasiconvex functions remains an open problem.
\end{remark}

For a further study on strong quasiconvexity, positive quasiconvexity and $\inf$-compactness we refer to \cite{CFZ,VI1,VI2}.

\section{Proximal point type algorithms for nonconvex problems}\label{sec:4}

In this section we show that the proximal point type algorithm remains convergent
when the function to be minimized is proper, lower semicontinuous and prox-convex (on a given closed convex set), but
not necessarily convex. Although the algorithm considered below is the simplest and most basic version available and some of the advances achieved in
the convex case, such as accelerations and additional flexibility by employing additional parameters, are at the moment still open in the prox-convex setting,
our investigations show that the proximal point type methods can be
successfully extended towards other classes of nonconvex optimization problems for
which they could not be employed so far due to lack of a theoretical fundament.



\begin{theorem}\label{usual:prox}
Let $K$ be a closed and convex set in $\mathbb{R}^{n}$ and $h: \mathbb{R}^{n} \rightarrow \overline{\mathbb{R}}$ be a proper, lower semicontinuous and prox-convex on $K$
function such that ${\amin}_{K} h\neq \emptyset$ and $K \cap \dom \,h \neq \emptyset$. Then for any
 $k \in \mathbb{N}$, we set
 \begin{equation}\label{step:min}
  x^{k+1} = {\prox}_{h} (K, x^{k})
 \end{equation}
Then $\{x^{k}\}_k$ is a minimizing sequence of $h$ over $K$, i.e., $h(x^{k}) \rightarrow \min_{x \in K} h(x)$ when $k\to +\infty$.
\end{theorem}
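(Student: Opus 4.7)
The plan is to mimic the classical Fejér-type argument for the convex proximal point method, but using the relaxed condition \eqref{prox:all} in place of the usual characterization \eqref{fundamental}. The prox-convex value $\alpha>0$ enters only as a harmless multiplicative constant, so once the inequality
\begin{equation*}
h(x^{k+1}) - h(x) \leq \alpha \langle x^{k+1} - x^{k}, x - x^{k+1}\rangle
\end{equation*}
is in hand for every $x\in K$ (which follows immediately from Definition \ref{all:functions} since $x^{k+1}=\prox_h(K,x^k)$ lies in $K\cap\dom h$), everything boils down to plugging in two well-chosen test points and summing.

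First, taking $x=x^k$ in the above inequality yields the descent estimate
\begin{equation*}
h(x^{k+1}) \leq h(x^{k}) - \alpha \lVert x^{k+1} - x^{k}\rVert^{2},
\end{equation*}
so the sequence $\{h(x^k)\}_k$ is nonincreasing. Second, pick any $x^{\ast}\in {\amin}_K h$, which exists by assumption, and apply the same inequality with $x=x^{\ast}$. The three-point identity \eqref{3:points} applied with $x\leftarrow x^{k+1}$, $z\leftarrow x^k$, $y\leftarrow x^{\ast}$ rewrites the right-hand side as a telescoping difference, giving
\begin{equation*}
h(x^{k+1}) - h(x^{\ast}) \leq \tfrac{\alpha}{2}\bigl(\lVert x^{k}-x^{\ast}\rVert^{2} - \lVert x^{k+1}-x^{\ast}\rVert^{2} - \lVert x^{k+1}-x^{k}\rVert^{2}\bigr).
\end{equation*}

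Summing this inequality for $k=0,\ldots,N-1$ and using the nonnegativity of $\lVert x^{k+1}-x^k\rVert^2$ and of $h(x^{k+1})-h(x^{\ast})$ (the latter because $x^{k+1}\in K$ and $x^{\ast}$ is a minimizer of $h$ on $K$), I obtain both
\begin{equation*}
\sum_{k=0}^{\infty}\bigl(h(x^{k+1})-h(x^{\ast})\bigr) \leq \tfrac{\alpha}{2}\lVert x^{0}-x^{\ast}\rVert^{2} < +\infty \quad \text{and} \quad \sum_{k=0}^{\infty}\lVert x^{k+1}-x^{k}\rVert^{2} < +\infty.
\end{equation*}
The convergence of the first series forces its general term to zero, i.e.\ $h(x^{k})\to h(x^{\ast})=\min_{x\in K} h(x)$, which is precisely the desired conclusion.

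I do not anticipate a serious obstacle; the potential worry is that the iterates $x^k$ might leave $K$ or land outside $\dom h$, but since $\prox_h(K,\cdot)=\prox_{h+\delta_K}(\cdot)$, the iterates automatically belong to $K\cap\dom h$ from $k\geq 1$ onward, and an implicit initialization $x^0\in K$ is assumed so that $\prox_h(K,x^0)$ is well-defined via \eqref{prox:all}. Single-valuedness of the update is guaranteed by Proposition \ref{single-val}.
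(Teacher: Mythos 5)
Your proof is correct, and it begins exactly where the paper's does: apply Definition \ref{all:functions} with $z = x^{k}$ and $\overline{x} = x^{k+1}$, then test the resulting inequality at $x = x^{k}$ (to get descent) and at a minimizer $x^{\ast}$. The two arguments part ways only in how they close. The paper rearranges the inequality at $x = x^{\ast}$ into $\lVert x^{k+1}-x^{\ast}\rVert^{2} \leq \lVert x^{k}-x^{\ast}\rVert^{2} + \frac{2}{\alpha}\bigl(h(x^{\ast})-h(x^{k+1})\bigr)$, discards the nonpositive $h$-term to obtain Fej\'er monotonicity of $\lVert x^{k}-x^{\ast}\rVert$, and then concludes by passing to a convergent subsequence and invoking lower semicontinuity of $h$ and closedness of $K$ --- a step which, as written there, is somewhat loose, since boundedness alone does not force a subsequence to converge to $x^{\ast}$ itself. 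You instead keep the $h$-term, telescope, and obtain $\sum_{k}\bigl(h(x^{k+1})-h(x^{\ast})\bigr) \leq \frac{\alpha}{2}\lVert x^{0}-x^{\ast}\rVert^{2}$; since the summands are nonnegative they must tend to zero, which is precisely the claim. Your route is tighter: it needs no compactness or lower-semicontinuity argument at the end, and, combined with the monotonicity of $\{h(x^{k})\}_{k}$, it immediately delivers the $\mathcal{O}(1/N)$ rate that the paper only asserts in a subsequent remark. The housekeeping points you flag (iterates lie in $K\cap\dom h$ from $k\geq 1$ on, single-valuedness via Proposition \ref{single-val}, the implicit initialization $x^{0}\in K$) are treated the same way, implicitly, in the paper.
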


\begin{proof}
Since $h$ is prox-convex on $K$, denote its prox-convex value by $\alpha > 0$ and for all $k \in \mathbb{N}$ one has
 \begin{align}
  & x^{k+1} = {\prox}_{h} (K, x^{k}) ~ \Longrightarrow ~ x^{k}
  - x^{k+1} \in \partial \left(\frac 1{\alpha} h + \delta_K\right)(x^{k+1}) \notag \\
  & \Longleftrightarrow \alpha \langle x^{k} - x^{k+1}, x - x^{k+1} \rangle \leq
   h(x) - h(x^{k+1}), \ \forall  x \in K.
  \label{alpha:subd}
 \end{align}
 Take $x = x^{k} \in K$, and since $\alpha > 0$, we have
  $$ 0 \leq \langle x^{k} - x^{k+1}, x^{k} - x^{k+1} \rangle \leq \frac{1}{\alpha} (h(x^{k})
  - h(x^{k+1})) \Longrightarrow ~ h(x^{k+1}) \leq h(x^{k}), ~ \forall  k \in \mathbb{N}.$$

 On the other hand, take $\overline{x} \in {\amin}_{K} h$. Then,
 for any $k \in \mathbb{N}$, by taking $x = \overline{x}$ in equation
 \eqref{alpha:subd}, we have
 \begin{align}
  \lVert x^{k+1} - \overline{x} \rVert^{2} & = \lVert x^{k+1} - x^{k} + x^{k}
  - \overline{x} \rVert^{2} \notag \\
  & = \lVert x^{k+1} - x^{k} \rVert^{2} + \lVert x^{k} - \overline{x} \rVert^{2}
  + 2\langle x^{k+1} - x^{k}, x^{k} - \overline{x} \rangle \notag \\
  & = -\lVert x^{k+1} - x^{k} \rVert^{2} + \lVert x^{k} - \overline{x} \rVert^{2}
  + 2\langle x^{k+1} - x^{k}, x^{k+1} - \overline{x} \rangle \notag \\
  & \leq \lVert x^{k} - \overline{x} \rVert^{2}
  +  \frac{2}{\alpha} (h(\overline{x}) - h(x^{k+1})) \leq \lVert x^{k} - \overline{x} \rVert^{2}, \label{useful:eq}
 \end{align}
 where we used that $h(\overline{x}) \leq h(x^{k+1})$. Thus, $\{x^{k} - \overline{x}\}_k$ is bounded.
 Then, and passing to a subsequence if needed, $x^{k} \rightarrow \overline{x}$ when
 $k \rightarrow + \infty$. Finally, since $h$ is lower semicontinuous and $K$ is closed, we have
 $\liminf_{k \rightarrow + \infty} h(x^{k}) = \min_{x \in K} h(x)$.
\end{proof}

\begin{remark}
 From \eqref{useful:eq} one can deduce straightforwardly that the known
 ${\mathcal{O}}(1/n)$ rate of convergence of the proximal point algorithm holds
 in the prox-convex case, too.
\end{remark}

\begin{remark}
 Although the function to be minimized in Theorem \ref{usual:prox} by means of the
 proximal point algorithm is assumed to be prox-convex, its prox-convex value
 $\alpha > 0$ needs not be known, even if it plays a role in the proof.
\end{remark}

\begin{remark}
 One can modify the proximal point algorithm by replacing in \eqref{step:min} the
 proximal step by ${\prox}_{h} (S_{h(x^{k})} (h), x^{k})$ without affecting the
 convergence of the generated sequence. Note also that taking $K=\mathbb{R}^{n}$
 in Theorem \ref{usual:prox} one ob\-tains the classical proximal point algorithm
 adapted for prox-convex functions and not for a restriction of such a function to
 a given closed convex set $K\subseteq \mathbb{R}^{n}$.
\end{remark}

\begin{example}
 Let $K=[0, 2] \times \mathbb{R}$ and consider the function $h: K \rightarrow
 \mathbb{R}$ given by $ h(x_{1}, x_{2}) = x^{2}_{2} - x^{2}_{1} - x_{1}$.
Observe that $h$ is continuous, strongly quasiconvex in the second argument,
 convex and strongly quasiconvex in the first argument, hence $h$ is strongly
 quasiconvex without being convex on $K$. Furthermore, by Example \ref{main:example}
$h$ is prox-convex on $K$.
The global minimum of $h$ over $K$ is $(2, 0)^\top$ and it can be found by applying
Theorem \ref{usual:prox}, i.e. via the proximal point algorithm, although the
function $h$ is not convex. First one determines the proximity operator
$${\prox}_h(K, (z_1, z_2)^\top )= \left(\left\{
\begin{array}{cc}
  0, & \mbox{if } z_1\leq -2 \\
  2, & \mbox{if } z_1> -2
\end{array},\right. \frac{z_2}{3}\right)^\top,\ z_1, z_2\in \mathbb{R}.$$
Taking into consideration the way $K$ is defined, it follows that the proximal step in Theorem \ref{usual:prox} delivers
$x^{k+1} = (2, x^{k}_2/3)^\top$, where $x^k = (x^k_1, x^k_2)^\top$. Whatever feasible starting point $x^1\in K$ of the algorithm is chosen, it delivers the global minimum of $h$ over $K$ because $x^k_1=2$ and $x^k_2=x^1_2/(3^{k-1})$ for all $k\in \mathbb{N}$.
\end{example}



\section{Conclusions and future work}

We contribute to the discussion on the convergence of proximal point algorithms
beyond convexity by introducing a new gene\-ra\-lized convexity notion called
\textit{prox-convexity}. We identify some classes of quasiconvex, weakly convex and DC functions (and
not only) that satisfy the new definition and different useful pro\-per\-ties of
these functions are proven. Then we show that the classical proximal point
algorithm remains con\-vergent when the convexity of the proper lower semicontinuous function to be minimized
is relaxed to prox-convexity (on a certain subset of the domain of the function).

In a future work, we aim to uncover more properties and develop calculus rules
for prox-convex functions as well as to extend our investigation to nonconvex
equilibrium problems and nonconvex mixed variational inequalities, to Hilbert
spaces and to splitting methods, also employing Bregman distances instead of the
classical one where possible.


\section{Declarations}

\subsection{Funding}

This research was partially supported by FWF (Austrian Science Fund), project
M-2045, and by DFG (German Research Foundation), project GR 3367/4-1 (S.-M. Grad)
and Conicyt--Chile under project Fondecyt Iniciaci\'on 11180320 (F. Lara).

\subsection{Conflicts of interest/Competing interests}

There are no conflicts of interest or competing interests related to this manuscript.

\subsection{Availability of data and material}

Not applicable.


\subsection{Code availability}

Not applicable.

\subsection{Authors' contributions}

Both authors contributed equally to the study conception and design.

\end{document}